\documentclass[11pt,a4paper]{article}

\usepackage[top=3cm, bottom=3cm, left=2.8cm, right=2.8cm]{geometry}

\usepackage[T1]{fontenc}
\usepackage{latexsym,amssymb}
\usepackage{amsmath}
\usepackage{amsthm}
\usepackage[mathscr]{eucal}
\usepackage{color}
\usepackage{xcolor}

\usepackage[abbrev]{amsrefs}

\newtheorem{Theorem}{\bf Theorem}[section]
\newtheorem{Lemma}{\bf Lemma}[section]
\newtheorem*{lemma*}{\bf Lemma}
\newtheorem{Proposition}{\bf Proposition}[section]
\newtheorem{Corollary}{\bf Corollary}[section]
\newtheorem{Remark}{\bf Remark}[section]
\newtheorem{Example}{\bf Example}[section]
\newtheorem{Definition}{\bf Definition}[section]

\newenvironment{theorem}{\begin{Theorem}}{\end{Theorem}}
\newenvironment{lemma}{\begin{Lemma}}{\end{Lemma}}
\newenvironment{proposition}{\begin{Proposition}}{\end{Proposition}}

\newenvironment{remark}{\begin{Remark}}{\end{Remark}}

\allowdisplaybreaks[4]

\numberwithin{equation}{section}

\def\XXint#1#2#3{{\setbox0=\hbox{$#1{#2#3}{\int}$}
\vcenter{\hbox{$#2#3$}}\kern-.5\wd0}}

\newcommand{\PaperTitle}{Nondegeneracy and asymptotic behavior of type I blow-up for semilinear heat equations with general nonlinearities}

\title{\textcolor{black}{\PaperTitle}}
\author{Yohei Fujishima and Norisuke Ioku}
\author{
        Yohei Fujishima\\
        {\small Department of Mathematical and Systems Engineering} \\
        {\small Faculty of Engineering, Shizuoka University} \\
        {\small 3-5-1 Johoku, Hamamatsu 432-8561, Japan} \\ 
        {\small fujishima@shizuoka.ac.jp}\\ \\
        Ryoto Ichiya \\ 
        {\small Mathematical Institute, Tohoku University} \\
        {\small Aramaki 6-3, Sendai 980-8578, Japan} \\ 
        {\small Current affiliation: Panasonic Digital Co., Ltd.} \\ 
        {\small ichiya.ryoto.t2@alumni.tohoku.ac.jp}\\ \\
        Norisuke Ioku \\
        {\small Mathematical Institute, Tohoku University} \\
        {\small Aramaki 6-3, Sendai 980-8578, Japan}\\
        {\small ioku@tohoku.ac.jp}
        }
\date{}
\begin{document}
\maketitle

\begin{abstract}
We study the local asymptotic behavior of nonnegative type~I blow-up solutions to semilinear heat equations with nonlinearities that need not be scale invariant.
Using \textcolor{black}{a quasi-similarity transformation adapted to} the associated ordinary differential equation, we establish locally uniform convergence to a constant profile on the parabolic spatial scale near every interior blow-up point.
Under a quantitative assumption on the nonlinearity at infinity, our results cover regularly varying nonlinearities in the Sobolev subcritical and critical ranges and, in one and two space dimensions, rapidly growing nonlinearities including 
double exponentials.
A key ingredient in this analysis is a positive lower bound for the normalized solution at interior blow-up points. In fact, under weaker assumptions on the nonlinearity, we prove this bound in arbitrary space dimension without assuming radial symmetry.
\end{abstract}

\section{Introduction}
We study the local asymptotic behavior of finite-time blow-up solutions to
the semilinear heat equation
\begin{equation}
    \label{eq:1.1}
    \begin{cases}
        \partial_t u = \Delta u + f(u),
        & x\in\Omega, \ t>0,
        \\
        u(x,t) = 0, & x\in \partial\Omega, \ t>0,
        \\
        u(x,0) = u_0(x) & x\in \Omega,
    \end{cases}
\end{equation}
where $N\ge 1$ is an integer, $\Omega$ is a domain in $\mathbb{R}^N$, $u_0$ denotes nonnegative initial data,
and $f\in C^1([0,\infty))$ is a nonnegative function.
Throughout the paper, we assume that $f$ satisfies
\begin{equation}
    \label{eq:1.2}
    F(u) :=
    \int_u^\infty \frac{1}{f(s)}\, ds < \infty
\end{equation}
for all $u>0$.

Under these assumptions, we consider nonnegative solutions of \eqref{eq:1.1} that blow up at a finite
time $T\in(0,\infty)$, in the sense that
\[
    \lim_{t\uparrow T}\|u(\cdot,t)\|_{L^\infty(\Omega)} = \infty.
\]
We call $T$ the blow-up time of $u$.
A blow-up solution $u$ is said to be of \textit{type~I} if
\begin{equation}
    \label{eq:1.3}
    \limsup_{t\uparrow T} \frac{T-t}{F\bigl(\|u(\cdot,t)\|_{L^\infty(\Omega)}\bigr)} < \infty,
\end{equation}
and of \textit{type~II} otherwise.
The condition \eqref{eq:1.3} means that the blow-up rate of $u$ is controlled by that of the corresponding ODE
$z'=f(z)$, while type~II blow-up occurs at a faster rate.
Indeed, an ODE solution $z$ that blows up at time $T$ satisfies $F(z(t))=T-t$.
{\color{black}We adopt the definition of type~I blow-up for general nonlinearities used in the first author's work~\cite{Fujis2012} and in joint work with Kan~\cite{FujishimaKan}.
For the non-exponential nonlinearities considered by Souplet~\cite{Soupl2023}, this definition is equivalent to the blow-up rate condition used there.
}

Besides determining whether blow-up occurs and estimating its rate, a basic problem is to describe the spatial behavior of solutions near blow-up points.
For general accounts of blow-up theory, including blow-up rates and profiles, we refer to the survey by Galaktionov and V\'azquez~\cite{GalaktionovVazquez} and the monograph by Quittner and Souplet~\cite{QuittnerSouplet}.
Here, an interior point $a\in\Omega$ is called a blow-up point if $u(x_j,t_j)\to\infty$ for some sequence $(x_j,t_j)\in\Omega\times(0,T)$ with $(x_j,t_j)\to(a,T)$.
An important question is whether the solution near such a point approaches the ODE profile on the parabolic spatial scale $|x-a|=O(\sqrt{T-t})$.

For the power nonlinearity $f(u)=u^p$ with $p>1$, Giga and Kohn addressed this question in a series of papers. They analyzed asymptotic self-similarity in \cite{GigaKohn1985}, obtained blow-up rate estimates in \cite{GigaKohn1987}, and proved nondegeneracy in \cite{GigaKohn1989}, thereby excluding a vanishing rescaled profile at a blow-up point.
For the exponential nonlinearity $f(u)=e^u$, the corresponding asymptotic result in dimensions $N\le2$ was proved by Liu~\cite{Liu}.
Since
\[
    F(u)=\begin{cases}
        \displaystyle\frac{u^{-(p-1)}}{p-1} & \mbox{if }f(u)=u^p,\\[6pt]
        e^{-u} & \mbox{if }f(u)=e^u,
    \end{cases}
\]
these results can be expressed in terms of the common normalization $(T-t)/F(u)$.
The following proposition states that this quantity converges to its ODE value $1$ near every interior blow-up point under the assumptions below.

\begin{proposition}[{\color{black}\cites{GigaKohn1985,GigaKohn1989,Liu}}]
\label{Proposition:1.1}
{\color{black}Assume that $\Omega$ is a bounded convex domain with smooth boundary.\footnote{\color{black}For $f(u)=u^p$, the convexity assumption can be dropped, as shown by Ishige and Mizoguchi~\cite{IshigM2003}.
The boundedness assumption can also be removed by combining the local asymptotic analysis in~\cite{GigaKohn1985} with the nondegeneracy criterion of Mizoguchi and Souplet~\cite{MizogS2019}.}}
Let $f(u)=u^p$ with $p>1$ or $f(u)=e^u$.
Assume that $(N-2)p<N+2$ in the case $f(u)=u^p$, and that $N\le 2$ in the case $f(u)=e^u$.
Let $u$ be a nonnegative type~{\rm I} blow-up solution of \eqref{eq:1.1} in $\Omega$ with blow-up time $T\in(0,\infty)$.
{\color{black}In the case $f(u)=e^u$, assume in addition that $\partial_t u\ge0$ in $\Omega\times(0,T)$.}
If $a\in\Omega$ is a blow-up point of $u$, then
\begin{equation}
    \notag
    \lim_{t\uparrow T}\frac{T-t}{F\bigl(u(a+y\sqrt{T-t},t)\bigr)}=1
\end{equation}
locally uniformly in $y\in\mathbb{R}^N$.
\end{proposition}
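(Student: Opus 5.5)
The plan is to follow the Giga--Kohn strategy in self-similar variables. Fix an interior blow-up point $a\in\Omega$ and set $s=-\log(T-t)$ and $y=(x-a)/\sqrt{T-t}$.

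For $f(u)=u^p$ we write $u(x,t)=(T-t)^{-1/(p-1)}w(y,s)$. Then $w$ solves
\[
\partial_s w=\Delta w-\tfrac12 y\cdot\nabla w-\tfrac{1}{p-1}w+w^p
\]
on the expanding domain $\{y: a+ye^{-s/2}\in\Omega\}$.

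For $f(u)=e^u$ we write $u=-\log(T-t)+w(y,s)$. Then
\[
\partial_s w=\Delta w-\tfrac12 y\cdot\nabla w+e^w-1 .
\]

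In both cases the claim $(T-t)/F(u)\to1$ is equivalent to $w\to\kappa$ locally uniformly, where $\kappa=(p-1)^{-1/(p-1)}$, respectively $\kappa=0$. The type~I assumption \eqref{eq:1.3} gives exactly a uniform bound on $w$: $0\le w\le C$, respectively $w\le C$. Parabolic regularity on unit time intervals then gives uniform $C^{2+\alpha,1+\alpha/2}_{\rm loc}$ bounds on $w$.

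\emph{Step 1: Lyapunov functional and limits.} We use the weighted energy with weight $\rho(y)=e^{-|y|^2/4}$:
\[
E[w]=\int\Bigl(\tfrac12|\nabla w|^2+\tfrac{1}{2(p-1)}w^2-\tfrac{1}{p+1}w^{p+1}\Bigr)\rho\,dy ,
\]
or, in the exponential case, $\int(\tfrac12|\nabla w|^2+w-e^w)\rho\,dy$. It satisfies
\[
\tfrac{d}{ds}E=-\int|\partial_sw|^2\rho\,dy+(\text{boundary terms}).
\]
Convexity of $\Omega$ makes the boundary terms have a good sign after cutting off, or else they are exponentially small. Hence $\int_{s}^{\infty}\!\!\int|\partial_s w|^2\rho<\infty$ and $E$ has a limit. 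Together with the compactness above, every sequence $s_j\to\infty$ has a subsequence along which $w(\cdot,s_j+\cdot)\to w_\infty$ locally uniformly. The limit $w_\infty$ is a bounded stationary solution on $\mathbb{R}^N$:
\[
\Delta w-\tfrac12 y\cdot\nabla w-\tfrac1{p-1}w+w^p=0,
\quad\text{respectively}\quad
\Delta w-\tfrac12 y\cdot\nabla w+e^w-1=0 .
\]

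\emph{Step 2: Liouville theorem for the stationary problem.} In the power case, Pohozaev-type identities tested against $\rho$ and $y\cdot\nabla w\,\rho$ are used. For $(N-2)p<N+2$ these show that the only bounded nonnegative solutions are $0$ and $\pm\kappa$, so $w_\infty\in\{0,\kappa\}$. In the exponential case with $N\le2$, the analogous identities, where $N\le 2$ plays the role of subcriticality, give $w_\infty\equiv 0$ or $w_\infty\equiv-\infty$. Here the possible degeneration $w\to-\infty$ is the analogue of the zero profile. This is where Liu's hypothesis $\partial_tu\ge0$ enters: it yields $w\ge -C$ locally near a blow-up point, via the comparison $u_t\ge0$ which forces $e^{u}(T-t)\gtrsim$ a positive constant along the blow-up sequence. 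The set of limits is connected, and $E$ takes distinct values at the two candidates. Hence $w$ converges, for all $s\to\infty$, to a single constant.

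\emph{Step 3: nondegeneracy.} The main obstacle is to rule out $w_\infty\equiv0$ in the power case, respectively the degenerate limit in the exponential case, at a point that is known to be a blow-up point. I would prove the Giga--Kohn $\varepsilon$-regularity statement: there is $\varepsilon>0$ such that if $(T-t)^{1/(p-1)}u\le\varepsilon$ in a backward parabolic cylinder around $(a,T)$, then $u$ stays bounded near $a$.

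First, on the ball $|x-a|<\delta$ the inequality $u_t\le\Delta u+\varepsilon^{p-1}(T-t)^{-1}u$ holds. Comparing $u$ with a localized supersolution of the form $(T-t)^{-\varepsilon^{p-1}}\phi$ gives $u\le C(T-t)^{-\mu}$ with $\mu$ small. Second, we iterate via Duhamel's formula on shrinking balls, improving the exponent $\mu$ until it becomes negative, that is, until $u$ is bounded. This contradicts the assumption that $a$ is a blow-up point.

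Since $w\to0$ locally uniformly in $y$ would give exactly such a smallness, but only on the parabolic scale, an intermediate step is needed to extend smallness to a fixed spatial ball. For this I would use the uniform type~I bound together with the monotonicity of $E$ and a covering argument over nearby points $a'$ (Giga--Kohn's device). Once $w_\infty=0$ is excluded, the identity $(T-t)/F(u)=(p-1)w^{p-1}$, respectively $e^{-w}$, turns $w\to\kappa$ into the stated limit $1$, locally uniformly in $y$.
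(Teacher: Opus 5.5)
\textbf{Verdict.} There is a genuine gap, and it is in the exponential case. For $f(u)=u^p$ your outline is the classical Giga--Kohn argument and its structure is sound. There $0\le w\le C$ makes $E$ bounded below, compactness is standard, the Liouville step leaves $\{0,\kappa\}$, and nondegeneracy can be treated last.

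\textbf{Where the exponential case breaks.} For $f(u)=e^u$ the argument does not close in the order you give it.
\begin{itemize}
\item With $w=u+\log(T-t)$, the type~I bound gives only $w\le C$. The integrand $w-e^w$ of your energy tends to $-\infty$ as $w\to-\infty$, so $E[w](s)$ is not known to be bounded below.
\item Consequently neither $\int^\infty\!\!\int|\partial_sw|^2\rho\,dy\,ds<\infty$ nor the extraction of stationary limits in Step~1 is available until you know that $w$ is locally bounded below. That is exactly the nondegeneracy you postpone to Step~3.
\item For the same reason, ``parabolic regularity on unit time intervals'' gives no uniform derivative bounds for a function that is only bounded above.
\item Your attempt to get the lower bound from $\partial_tu\ge0$ in Step~2 does not work as stated. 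Time monotonicity gives $u(x,t)\ge u(x,t')$ for $t\ge t'$, which contains nothing of the form $u(a,t)\ge-\log(T-t)-C$. The universal bound $\max_xu(\cdot,t)\ge-\log(T-t)$ concerns the maximum, not a fixed blow-up point. A Friedman--McLeod bound $u_t\ge\varepsilon e^u$ goes the wrong way: it gives the upper bound $(T-t)e^u\le\varepsilon^{-1}$. And a bound along one blow-up sequence would not control all large $s$ anyway.
\end{itemize}

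\textbf{What is missing, and how the paper supplies it.} You need a gradient bound that does not presuppose a lower bound, and a nondegeneracy proof that precedes, and is independent of, the energy limit. This is exactly the paper's order. The paper does not reprove Proposition~\ref{Proposition:1.1} directly; it is recovered from Theorem~\ref{Theorem:1.1}, since $f'F$ is constant for both model nonlinearities.
\begin{itemize}
\item Lemma~\ref{Lemma:2.1} applies a two-point maximum principle to $\log(1/F(u))$, which yields Proposition~\ref{Proposition:2.1}. For $e^u$ this reads $|\nabla_yw|\le C$, with no lower bound on $w$.
\item In Lemma~\ref{Lemma:3.1} this bound gives $\Phi_b(s)=\int\chi^2v_b^{2\beta}\rho\,dy\simeq v_b(0,s)^{2\beta}$, where $v_b=e^{w_b}$, respectively $(p-1)w_b^{p-1}$.
\item Then \eqref{eq:3.10} turns smallness of $\Phi_b(s_0)$ into decay like $e^{-2\beta(s-s_0)}$, uniformly in the center $b$. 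This is precisely boundedness of $u(b,t)$.
\item Continuity of $b\mapsto\Phi_b(s_0)$ spreads the smallness to a fixed ball. No separate $\varepsilon$-regularity lemma, no convexity and no $\partial_tu\ge0$ are needed.
\item Only afterwards are $v(0,s)\ge c$ and the gradient bound combined to bound the energy; see \eqref{eq:4.2}--\eqref{eq:4.3}.
\end{itemize}

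\textbf{A repair within your framework.} The exponential case can be fixed once $|\nabla_yw_b|\le C$ holds uniformly in $b$.
\begin{itemize}
\item If $w_a(0,s_j)\to-\infty$, then $E[w_a](s_j)\to-\infty$.
\item Continuity in the center, together with monotonicity of $E$ up to exponentially small boundary terms, keeps $E[w_b](s)\le-K$ for all $b$ near $a$ and all later $s$.
\item The gradient bound turns this into $(T-t)e^{u(b,t)}\le\varepsilon$ on a fixed ball. Your $\varepsilon$-regularity step then applies.
\item Only after that should Steps~1--2 be run.
\end{itemize}

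\textbf{Smaller points in Step~3.}
\begin{itemize}
\item On a fixed ball, $u$ can be as large as $\varepsilon(T-t)^{-1/(p-1)}$ on the lateral boundary, so it cannot be dominated there by $(T-t)^{-\varepsilon^{p-1}}\phi$. The localization needs a cutoff and Duhamel's formula, with the commutator derivatives moved onto the heat kernel; its off-diagonal decay absorbs that growth.
\item In the power case, monotonicity of $E$ gives $w_b\to0$ for each nearby center, but not uniformly in $b$. You need a quantitative small-data statement such as Lemma~\ref{Lemma:3.1}, or a compactness argument over centers.
\item Finally, $(T-t)/F(u)=e^{w}$, not $e^{-w}$. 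This slip is harmless.
\end{itemize}
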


Extensions of this asymptotic description to nonlinearities without exact
scale invariance have been obtained in recent work.
Chabi~\cite{Chabi} proved analogous convergence for nonlinearities
$f(u)=u^pL(u)$, where $p>1$ is Sobolev subcritical and the slowly varying
factor $L$ satisfies an additional differential decay condition
(see also Chabi and Souplet~\cite{ChabiS2025} for refined blow-up profiles).
For exponential growth, 
Chabi~\cite{ChabiExponential}
studied
$f(u)=e^uL(e^u)$, with $L$ slowly varying and satisfying suitable decay
conditions on its derivatives. {\color{black}For nonconstant nonnegative solutions on $\mathbb{R}^N$ or a ball
that are radially nonincreasing and satisfy}
$u_t\ge0$, the analysis yields a sharp profile in the original space-time
variables, the final spatial profile, and an expansion including the
first correction to the ODE profile. Under these assumptions, 
the results
are valid in arbitrary space dimension. When $N\le2$, the assumption of monotonicity in time can be replaced
by a type~I blow-up condition
(see \cite{ChabiExponential}*{Remark~3.3}).
Turning to super-exponential growth, Ichiya~\cite{Ichiya} considered
$f(u)=u^q\exp(u^p)$ with $p>1$ and $q\in\{0\}\cup[1,\infty)$,
and established the normalized asymptotic behavior for radially
nonincreasing type~I blow-up solutions in balls when $N\le2$.
These results motivate a criterion expressed directly in terms of $f$
and $F$ that applies to different growth regimes.

{\color{black}In this paper, we characterize the asymptotic profile of a type I blow-up solution for nonlinearities that need not be scale invariant, without assuming radial symmetry.}
To formulate such a criterion and study the behavior near an interior blow-up point $a\in\Omega$, we use
\textcolor{black}{a quasi-similarity transformation combining similarity variables with the normalization provided by $F$}.
Following the transformation approach of \cite{FujishimaIoku}, we \textcolor{black}{introduce the similarity variables}
\[
    y:=\frac{x-a}{\sqrt{T-t}},\qquad s:=-\log(T-t),
\]
and define \textcolor{black}{the normalized function $v$} by
\begin{equation}
\label{eq:1.4}
    v(y,s):=\frac{T-t}{F\bigl(u(a+y\sqrt{T-t},t)\bigr)}.
\end{equation}
For $f(u)=u^p$ $(p>1)$ and $f(u)=e^u$, we have $f'(u)F(u)\equiv q_f$, where $q_f=p/(p-1)$ and $q_f=1$, respectively. A straightforward computation then yields
\begin{equation}
\label{eq:1.5}
    \partial_s v=\Delta v-\frac12\,y\cdot\nabla v-(2-q_f)\frac{|\nabla v|^2}{v}-v+v^2.
\end{equation}
In these model cases, weighted energy estimates yield stationary limits
along sequences of rescaled times. A Pohozaev identity then shows that
the admissible positive limit profiles are constant under the assumptions
of Proposition~\ref{Proposition:1.1}. To obtain convergence to the ODE
value $1$, one must also exclude the zero profile at a blow-up point.

For a general nonlinearity $f$, however, the rescaled equation contains an additional term. More precisely, if
$f'(u)F(u)\to q_f\in[1,\infty)$ as $u\to\infty$, the same change of
variables gives
\begin{equation}
\label{eq:1.6}
    \partial_s v=\Delta v-\frac12\,y\cdot\nabla v-(2-q_f)\frac{|\nabla v|^2}{v}-v+v^2+\frac{|\nabla v|^2}{v}\bigl(f'(u)F(u)-q_f\bigr),
\end{equation}
where $u=u(a+y\sqrt{T-t},t)$.
The last term measures the deviation from the model equation
\eqref{eq:1.5} and produces an error in the corresponding weighted energy
inequality. Controlling the integral of this error in time is essential
to the asymptotic analysis.
In \cite{Ichiya}, this control is obtained from explicit asymptotic
estimates for the nonlinearity. Here we instead impose the following
quantitative condition: there exists $r>1/2$ such that
\begin{equation}
    \label{eq:1.7}
    \limsup_{u\to\infty}\, \bigl| f'(u)F(u)-q_f \bigr| \left(\log\frac{1}{F(u)}\right)^r<\infty.
\end{equation}
{\color{black}The logarithmic correction underlying \eqref{eq:1.7}
was introduced by the first and third authors jointly with Ruf and Terraneo~\cite{FujishimaIokuRufTerraneo}
for $r=1$ and considered by Kumagai~\cite{Kumagai}
for $0<r\le2$.}
Note that, since $1/F(u)\to\infty$ as $u\to\infty$,
the assumption \eqref{eq:1.7} implies $f'(u)F(u)\to q_f$ as $u\to\infty$.
It is also known that $q_f\ge 1$ (see \cite{FujishimaIoku}*{Remark~1.1}).
As shown in Section~5, condition \eqref{eq:1.7} holds for the power and
exponential classes considered in \cites{Chabi,ChabiExponential}, as well
as for more rapidly growing nonlinearities, including
$f(u)=\exp(\exp(u^2))$.

We next address the exclusion of the zero profile. A central contribution of the present paper is a lower bound at interior
blow-up points for general nonlinearities. Building on the nondegeneracy
approach of \cite{GigaKohn1989} and related local boundedness arguments
in \cite{Chabi}, we prove in Theorem~\ref{Theorem:3.1} that
\[
    \liminf_{t\uparrow T}\frac{T-t}{F(u(a,t))}>0
\]
at every interior blow-up point $a$ of a nonnegative type~I solution.
Under the standing assumptions on $f$, this estimate requires only the
existence of a finite limit $q_f=\lim_{u\to\infty}f'(u)F(u)\in[1,\infty)$.
It holds in arbitrary space dimension, without assuming radial symmetry or the
quantitative condition \eqref{eq:1.7}.
This nondegeneracy estimate also supplies the lower bound needed to
control the energy error. Together with the gradient estimate, it gives
a lower bound in a shrinking neighborhood of the blow-up point
(Lemma~\ref{Lemma:4.1}). Condition \eqref{eq:1.7} then bounds the energy
error by the integrable function $Cs^{-2r}$ for sufficiently large $s$,
since $r>1/2$.
Thus the cases $q_f=1$ and $q_f>1$ can be treated by the same argument,
and neither case requires an additional assumption of a pointwise lower
bound at the blow-up point.

\medskip 

Combining these estimates with the classification of stationary limit profiles, we obtain the following main result.

\begin{theorem}
\label{Theorem:1.1}
Let $\Omega\subset\mathbb{R}^N$ be a domain, and let $f\in C^1([0,\infty))$ be nonnegative and satisfy \eqref{eq:1.2} and \eqref{eq:1.7} for some $q_f\ge1$ and $r>1/2$.
Let $u$ be a nonnegative type~{\rm I} blow-up solution of \eqref{eq:1.1} with blow-up time $T\in(0,\infty)$, and let $a\in\Omega$ be a blow-up point of $u$.
Assume further that $N\le2$ if $q_f=1$, and that
$q_f\ge {(N+2)}/{4}$ if $q_f>1$.
Then
\begin{equation}
    \label{eq:1.8}
    \lim_{t\uparrow T}\frac{T-t}{F\bigl(u(a+y\sqrt{T-t},t)\bigr)}=1
\end{equation}
locally uniformly for $y\in\mathbb{R}^N$.
\end{theorem}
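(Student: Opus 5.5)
We follow the Giga--Kohn strategy in the quasi-similarity variables \eqref{eq:1.4}: we first prove subsequential convergence of $v(\cdot,s)$ to stationary solutions of the model equation \eqref{eq:1.5}, then classify those stationary solutions, and finally use nondegeneracy to rule out the zero profile.

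\textbf{Step 1: a priori bounds.} Type~I blow-up \eqref{eq:1.3} gives $F(u)\ge c(T-t)$, so $v\le C$. It is convenient to work with $w:=v^{-1/(q_f-1)}$ when $q_f>1$, and with $w:=-\log v$ when $q_f=1$. In the model cases this reduces \eqref{eq:1.5} to the usual similarity equation $\partial_s w=\Delta w-\frac12 y\cdot\nabla w-\beta w+w^{p}$ (respectively $e^{w}$), where $p=q_f/(q_f-1)$ and $\beta=1/(p-1)$. Interior parabolic estimates, together with the standard gradient bound of Giga--Kohn type for type~I solutions, give uniform $C^{2+\alpha,1+\alpha/2}$ bounds for $w$ on compact sets in $y$. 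These bounds hold on the region where $v$ stays bounded below. By Theorem~\ref{Theorem:3.1}, $v(0,s)\ge c_0>0$ for large $s$. Combining this with the gradient estimate, Lemma~\ref{Lemma:4.1} gives $v\ge c>0$ on $|y|\le R(s)$ with $R(s)\to\infty$. Hence the error term $\frac{|\nabla v|^2}{v}\bigl(f'(u)F(u)-q_f\bigr)$ is well defined and controlled there. Moreover, $\log(1/F(u))=s+\log(1/v)=s+O(1)$ on this region, so \eqref{eq:1.7} gives $|f'(u)F(u)-q_f|\le Cs^{-r}$.

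\textbf{Step 2: a weighted energy inequality with integrable error.} We localize with a cutoff $\psi$ supported in $|y|\le R(s)$ (the domain is general, so a cutoff is needed anyway) and use the Gaussian weight $\rho=e^{-|y|^2/4}$. Define the Giga--Kohn energy $E[w]=\int(\frac12|\nabla w|^2+\frac\beta2 w^2-\frac{w^{p+1}}{p+1})\rho\psi$, with the analogous choice when $q_f=1$. Its derivative is
\[
\frac{d}{ds}E=-\int|\partial_s w|^2\rho\psi+\mathcal{R}(s).
\]
The remainder $\mathcal{R}$ collects the cutoff terms, which are exponentially small in $R(s)$, and the terms coming from the error. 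Estimating $\int|\partial_s w|\,|\nabla w|^2|\,\mathrm{err}|\rho$ by Cauchy--Schwarz, we absorb half of $\int|\partial_s w|^2\rho\psi$ and are left with $Cs^{-2r}$. This is integrable because $r>1/2$. Together with the uniform bounds, it follows that $E$ is bounded and converges, and that $\int_{s_0}^\infty\!\int|\partial_s w|^2\rho\psi\,ds<\infty$. Hence for every sequence $s_j\to\infty$, $w(\cdot,s_j+\cdot)$ converges locally uniformly, along a subsequence, to a bounded stationary solution $W$ of the model equation.

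\textbf{Step 3: classification and conclusion.} The hypothesis $q_f\ge (N+2)/4$ with $q_f>1$ is equivalent to $(N-2)p\le N+2$. Under this hypothesis, and for $N\le2$ when $q_f=1$, the Pohozaev argument of Giga--Kohn (Liu in the exponential case) shows that bounded nonnegative stationary solutions are constant. For $v$ this means the constant solutions of \eqref{eq:1.5}, namely $V\equiv0$ or $V\equiv1$. In the critical case $(N-2)p=N+2$, the Giga--Kohn argument for bounded solutions still applies. Theorem~\ref{Theorem:3.1} excludes $V\equiv0$. The $\omega$-limit set is connected, so it equals $\{1\}$, and \eqref{eq:1.8} follows locally uniformly.

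\textbf{Main obstacle.} The hardest part is Step~2 in the regime $q_f=1$ and, in general, making the absorption rigorous. This requires a uniform bound on $|\nabla w|^2$ in weighted $L^2$, so that the error integral really is at most $Cs^{-2r}$, and we must ensure that $R(s)$ grows fast enough for the cutoff errors to be integrable. We would first try to obtain the gradient bound from the Bernstein-type estimate used in Lemma~\ref{Lemma:4.1}, and to take $R(s)\sim\sqrt{s}$.
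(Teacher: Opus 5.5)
Your route is the paper's. Once the change of variables is done correctly, the Giga--Kohn/Liu energy you use is a constant multiple of the paper's $E[v]$ in \eqref{eq:2.25}. The error is absorbed by Young's inequality as in Lemma~\ref{Lemma:2.3}, the limit is identified through Lemma~\ref{Lemma:2.4}, and the zero profile is excluded by Theorem~\ref{Theorem:3.1}. Your smooth cutoff at $R(s)\sim\sqrt s$, in place of the sharp moving ball $|y|\le s^\alpha$ with boundary term $G(s)$, is cosmetic.

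Several steps, however, are wrong as written. The substitutions should be $w=((q_f-1)v)^{q_f-1}$ (not $v^{-1/(q_f-1)}$) and $w=\log v$ (not $-\log v$). For $q_f=1$ the model equation is $\partial_s w=\Delta w-\tfrac12 y\cdot\nabla w-1+e^{w}$.

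More substantively, Lemma~\ref{Lemma:4.1} does not give $v\ge c>0$ on $|y|\le R(s)$. It gives only $v\ge (T-t)^{\varepsilon}=e^{-\varepsilon s}$. Theorem~\ref{Theorem:3.1} combined with Proposition~\ref{Proposition:2.1} gives $v(y,s)\ge c\,e^{-C|y|}$, which degenerates on the growing ball. Hence $\log(1/F(u))=s+\log v$ (note the sign) is not $s+O(1)$ there. What you need, and what is true, is the one-sided bound $\log(1/F(u))\ge s-C-CR(s)\ge s/2$, and this uses $R(s)=o(s)$. That is the real upper constraint on the growth of $R(s)$, and your write-up misses it because it rests on a false uniform lower bound.

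What you call the main obstacle is already settled by the pointwise estimate of Proposition~\ref{Proposition:2.1}, which you invoke in Step~1. It gives $|\nabla v|/v\le C$, hence $|\nabla w|/w\le C$ and $w\le C$ if $q_f>1$, and $|\nabla w|\le C$ if $q_f=1$. After Young's inequality the leftover term is a constant multiple of $\int|\nabla w|^4w^{-2}|f'(u)F(u)-q_f|^2\rho\,dy$ (without $w^{-2}$ when $q_f=1$). Its integrand is pointwise at most $Cs^{-2r}\rho$ on the localized region. A weighted $L^2$ bound on $|\nabla w|^2$ would not suffice, because $w^{-2}$ is unbounded there.

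The same pointwise bounds do three further jobs you skip:
\begin{itemize}
\item they make the energy bounded;
\item they make the cutoff terms integrable (the paper's \eqref{eq:4.1}--\eqref{eq:4.3});
\item for $q_f=1$, they supply the extra hypothesis of Lemma~\ref{Lemma:2.4} that $|\nabla v_\infty|/v_\infty$ be bounded.
\end{itemize}
For $q_f=1$, boundedness and nonnegativity alone are not the right class for the Pohozaev classification. With these corrections your proof coincides with the paper's.
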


For $f(u)=u^p$, the condition $q_f\ge(N+2)/4$ is equivalent to
$(N-2)p\le N+2$, since $q_f=p/(p-1)$. Theorem~\ref{Theorem:1.1}
therefore extends the asymptotic description in
Proposition~\ref{Proposition:1.1} to nonlinearities satisfying
\eqref{eq:1.7}, including examples without exact scale invariance.
The Sobolev critical case is included under the type~I assumption.
The dimensional restrictions enter through the classification of
stationary limit profiles, whereas the nondegeneracy estimate of
Theorem~\ref{Theorem:3.1} holds without these restrictions.
Our arguments are localized near interior blow-up points and require no regularity of $\partial\Omega$, provided that the solution is classical in $\Omega\times(0,T)$.

The paper is organized as follows.
Section~2 establishes local derivative estimates, the weighted energy
inequality \textcolor{black}{associated with the quasi-similarity transformation}, and a Pohozaev identity.
Section~3 then proves the nondegeneracy estimate in
Theorem~\ref{Theorem:3.1}. In Section~4, we use this estimate to control
the energy error and prove Theorem~\ref{Theorem:1.1}.
Finally, Section~5 verifies \eqref{eq:1.7} for the power and exponential classes
considered in \cites{Chabi,ChabiExponential} and gives concrete examples.
It also treats the more rapidly growing nonlinearities
$f(u)=u^q\exp(u^p)$ and $f(u)=\exp(\exp(u^p))$.

\section{Preliminaries}

We collect some preliminary facts that will be used below.
Throughout this paper, $C$ denotes a generic constant whose value may change from line to line.
We write $B_R(a):=\{x\in\mathbb{R}^N:|x-a|<R\}$ and $B_R:=B_R(0)$.

\subsection{Gradient estimates}

In this subsection, we establish local derivative estimates near an interior
blow-up point, with constants that are uniform with respect to the observation point.
This uniformity is a key feature of our estimates and does not follow directly
from the related estimates in \cite{Ichiya}*{Proposition~2.5}.
\begin{proposition}
\label{Proposition:2.1}
Let $\Omega\subset\mathbb{R}^N$ be a domain, and let
$f\in C^1([0,\infty))$ be nonnegative and satisfy \eqref{eq:1.2}.
Assume that the limit
\[
    q_f:=\lim_{u\to\infty}f'(u)F(u)
\]
exists.
Let $u$ be a type~{\rm I} blow-up solution of \eqref{eq:1.1}
with blow-up time $T\in(0,\infty)$.
Then, for every $K\Subset\Omega$ and $t_*\in(0,T)$,
there exists a constant $C>0$ such that
\begin{equation}
\label{eq:2.1}
    \frac{|\nabla u(x,t)|}{f(u(x,t))F(u(x,t))}
    \le\frac{C}{\sqrt{T-t}},\qquad
    \frac{|D^2u(x,t)|+|u_t(x,t)|}{f(u(x,t))F(u(x,t))}
    \le\frac{C}{T-t}
\end{equation}
for all $(x,t)\in K\times[t_*,T)$.
The constant may depend on $u,f,K,t_*,T$ and the type~{\rm I} bound,
but is independent of $(x,t)$.
\end{proposition}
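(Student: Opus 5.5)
The plan is to rescale around an arbitrary point $(x_0,t_0)\in K\times[t_*,T)$ on the parabolic scale $\lambda:=\sqrt{T-t_0}$, and to work with the logarithmic variable
\[
w:=-\log F(u),\qquad\text{so that}\qquad \nabla w=\frac{\nabla u}{f(u)F(u)},\quad \partial_t w=\frac{u_t}{f(u)F(u)}.
\]
The estimate \eqref{eq:2.1} is then exactly a bound on $\nabla w$, $\partial_t w$ and (up to a $|\nabla w|^2$ correction) $D^2w$ on the scale $\lambda$.

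\textbf{Equation for $w$.} Since $F'=-1/f$ and $(fF)'=f'F-1$, a direct computation gives
\[
\partial_t w=\Delta w+\bigl(f'(u)F(u)-1\bigr)|\nabla w|^2+e^{w}.
\]
The rescaled function
\[
W(z,\tau):=w(x_0+\lambda z,\,t_0+\lambda^2\tau)+\log\lambda^2,\qquad (z,\tau)\in B_1\times(-1,\tfrac12),
\]
satisfies the same equation, with coefficient $a(z,\tau):=f'(u)F(u)-1$. The type~I bound \eqref{eq:1.3} gives $F(u)\ge c(T-t)\ge c\lambda^2/2$ on this cylinder, so $e^W\le C$ there, uniformly in $(x_0,t_0)$. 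Because $f'F\to q_f$ and $f'F$ is continuous, $a$ is bounded wherever $u$ is large. On the region where $u$ stays bounded, $u$ is a classical solution with bounded data away from $t=T$, and I would treat it by ordinary interior parabolic estimates, after first shrinking $t_*$ and using a slightly larger compact $K'\Subset\Omega$ so that the rescaled cylinders stay inside $K'\times[t_*/2,T)$. For $t_0$ in a compact subinterval of $[t_*,T)$ the estimate is trivial by smoothness, so only $t_0$ close to $T$ matters.

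\textbf{Gradient bound for $W$.} Once $\sup_{B_{1/2}\times(-1/4,1/2)}|\nabla W|\le C$ is established, scaling back gives the first inequality of \eqref{eq:2.1}. The difficulty is that $W$ is bounded above but not below, the equation has a quadratic gradient term, and $a$ is only continuous, since $f\in C^1$ only. A Bernstein argument differentiating $a$ is therefore not available. I would instead pass to the function
\[
\Phi:=\bigl(\lambda^2/F(u)\bigr)^{\kappa}=e^{\kappa W},\qquad \kappa>0 \text{ small}.
\]
This is bounded, $0\le\Phi\le C$, and solves
\[
\Phi_\tau=\Delta\Phi+\frac{a+1-\kappa}{\kappa}\,\frac{|\nabla\Phi|^2}{\Phi}+\kappa\,e^W\Phi .
\]
I would choose $\kappa$ so that the coefficient $(a+1-\kappa)/\kappa$ has a definite sign for large $u$; this uses $a+1\to q_f\ge1$. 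Then I would apply a local gradient estimate of the type $|\nabla\Phi|^2\le C\Phi\cdot\sup\Phi$ (a Li--Yau/Hamilton-type bound for positive bounded solutions of such equations, proved with a cutoff and the maximum principle applied to $|\nabla\Phi|^2/\Phi$). This yields $|\nabla W|=|\nabla\Phi|/(\kappa\Phi)\le C\Phi^{-1/2}$. I expect this step to be the main obstacle. The point is to obtain a bound on $|\nabla W|$ that does not degenerate where $\Phi$ is small, i.e.\ where $u(x_0,t_0)\ll F^{-1}(\lambda^2)$. Where $\Phi$ is small, I would first try a second rescaling at the smaller scale $\rho^2:=F(u(x_0,t_0))\ge\lambda^2$. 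This choice normalizes $W(0,0)=0$. At this scale, the time interval $\rho^2\tau$ must still stay below $T-t_0$, so I would combine it with a Harnack-type comparison showing that $W$ cannot drop or rise by more than $O(1)$ on unit cylinders.

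\textbf{Second derivatives and $u_t$.} With $|\nabla W|\le C$ and $e^W\le C$, the right-hand side $a|\nabla W|^2+e^W$ is bounded. Interior $L^p$ estimates therefore give $W\in W^{2,1}_p$ and, by Sobolev embedding, $\nabla W$ Hölder; this is enough to make $a$ Hölder along the solution, since $a$ is continuous in $u$ and $u$ is Hölder. Parabolic Schauder estimates then bound $D^2W$ and $W_\tau$. Finally,
\[
\frac{D^2u}{fF}=D^2w+(f'F-1)\nabla w\otimes\nabla w,\qquad \frac{u_t}{fF}=w_t,
\]
so together with the gradient bound this yields the second inequality of \eqref{eq:2.1} after undoing the scaling. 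All constants depend only on the type~I constant, $f$, $K$, $t_*$ and $T$, and not on $(x_0,t_0)$.
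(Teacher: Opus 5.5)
There is a genuine gap, and it sits at the step you flag as the main obstacle. You need an oscillation bound for $\log(1/F(u))$ on parabolic cylinders of size $\sqrt{T-t}$ that does not depend on how far $u$ lies below the ODE level. Your $W$ has no uniform lower bound; you only know $W\ge\log\lambda^2-\log F(d)$. Your tool does not provide such a bound. A Li--Yau/Hamilton estimate for $|\nabla\Phi|^2/\Phi$ is proved by differentiating the equation and applying the maximum principle to the resulting Bernstein quantity. This produces $\nabla a=(f'F)'(u)\nabla u$ and hence $f''$, which is the very obstruction you point out one sentence earlier. (Also, the coefficient in the $\Phi$-equation is $(a-\kappa)/\kappa$, not $(a+1-\kappa)/\kappa$.)

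Even granting that estimate, $|\nabla W|\le C\Phi^{-1/2}$ degenerates where $\Phi$ is small, and your proposed repair is circular. On a backward cylinder of scale $\rho=\sqrt{F(u(x_0,t_0))}\ge\lambda$, the type~I bound only gives $\rho^2/F(u)\le M\rho^2/(T-t)$, which is of size $\rho^2/\lambda^2$ near the top of the cylinder. Moreover, the ``Harnack-type comparison'' you invoke to close the argument is precisely the statement to be proved, namely \eqref{eq:2.3}.

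The paper obtains this bound from a two-point maximum principle (Kruzhkov, Andrews--Clutterbuck). It sets $z=H(u)$ with $H'=1/\widehat h_f$, where $\widehat h_f$ modifies $fF$ on a bounded range so that $|\widehat h_f'-(q_f-1)|\le\delta$ everywhere. It then shows $z(x,t)-z(y,t)\le A\log\bigl(1+k|x-y|/\sqrt{T-t}\bigr)+b_0(t)(|x-a|^2+|y-a|^2)$ on small balls. This argument only evaluates the equation at two points and never differentiates the coefficient:
the concavity $\phi_{rr}=-\phi_r^2/A$ absorbs the difference of the quadratic gradient terms, which is small because $\theta$ is uniformly small and the two gradients nearly coincide;
$\phi_t-\phi_{rr}$ is a large multiple of $1/(T-t)$ and beats the source $f(u)/\widehat h_f(u)\le(M+1)/(T-t)$;
and the resulting bound is independent of the size of $z$.
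A mollified spatial average, integrated in time, then gives the two-sided comparison \eqref{eq:2.3}.

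Your higher-derivative step has a second, smaller gap. Since $f\in C^1$ only, $a=f'(u)F(u)-1$ is merely continuous in $u$, so $a$ evaluated along a H\"older $u$ is not H\"older. With a merely continuous right-hand side $a|\nabla W|^2+e^W$, the $L^p$ theory gives $W\in W^{2,1}_p$ but no uniform $L^\infty$ bound on $D^2W$ and $W_\tau$, and Schauder estimates do not apply.

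The paper avoids this by working with $w=\lambda^2/F(u)$ normalized by its central value, $w/w_c$. By \eqref{eq:2.3} this takes values in a fixed compact interval, so the gradient coefficient becomes a continuous function $A$ of the unknown alone. The change of unknown $\int\exp(\int A)$ from Lemma~\ref{Lemma:2.2} then removes the gradient term entirely. The new right-hand side $\mathcal G$ satisfies $\mathcal G'=g'+Ag$, hence is Lipschitz, so H\"older and Schauder estimates apply. This step, too, depends on the two-sided bound from the first step, so both parts of your argument hinge on the missing oscillation estimate.
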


{\color{black}We begin with two preliminary lemmas. Lemma~\ref{Lemma:2.1} establishes
a gradient bound by the maximum principle, as well as a local comparison
estimate for $F(u)$.} Lemma~\ref{Lemma:2.2} then provides interior derivative
estimates. We then 
prove Proposition~\ref{Proposition:2.1},
following the rescaling and interior regularity approach of
\cite{GigaKohn1985}*{Proposition~1}.

\begin{lemma}
  \label{Lemma:2.1}
Let $\Omega\subset\mathbb R^N$ be a domain, and let $f\in C^1([0,\infty))$
be nonnegative and satisfy \eqref{eq:1.2}.
Assume that the limit $q_f$ exists and belongs to $[1,\infty)$.
Let $u$ be a type~{\rm I} blow-up solution of \eqref{eq:1.1}
with blow-up time $T\in(0,\infty)$. 
For every $K\Subset\Omega$, there are $C>0$ and $t_K<T$ such that
\begin{equation}
  \label{eq:2.2}
 \left|\nabla\log\frac1{F(u(x,t))}\right|\le\frac C{\sqrt{T-t}} 
\end{equation}
for $x\in K$ and $t_K<t<T$, 
and
\begin{equation}
  \label{eq:2.3}
 \left|\log\frac{F(u(x,t))}{F(u(\xi,\tau))}\right|\le C 
\end{equation}
for all $|\xi-x|\le2\sqrt{T-t}$, $t_K<t<T$, and $T-2(T-t)\le\tau\le T-(T-t)/4$. 
\end{lemma}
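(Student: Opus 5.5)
The quantity in \eqref{eq:2.2} is $\nabla\log(1/F(u))=\nabla u/(f(u)F(u))$, so I would work with $g:=\log\bigl(1/F(u)\bigr)$ throughout. Writing $w=F(u)$, one has $\nabla w=-\nabla u/f(u)$ and $w_t=\Delta w-f'(u)|\nabla w|^2-1$, and since $g=-\log w$ this gives
\[
  g_t=\Delta g+\bigl(f'(u)F(u)-1\bigr)|\nabla g|^2+\frac1{F(u)} .
\]
Set $b:=f'(u)F(u)-1$. Because $f'F\to q_f\in[1,\infty)$ and $u$ is continuous, $b$ is bounded on $K'\times[t_*,T)$ for any $K\Subset K'\Subset\Omega$. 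The type~I assumption \eqref{eq:1.3} gives $F(u)\ge c(T-t)$, so the forcing term satisfies $0<1/F(u)\le C/(T-t)$.

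\textbf{Step 1: the gradient bound \eqref{eq:2.2}.} I would run a Bernstein-type maximum principle argument in the spirit of Giga--Kohn. Fix $(x_0,t_0)$ with $x_0\in K$, put $\lambda=\sqrt{T-t_0}$, and rescale:
\[
  G(y,s)=g(x_0+\lambda y,\,t_0+\lambda^2 s)-\log\lambda^{-2},\qquad |y|\le 2,\ -1\le s\le \tfrac12 .
\]
Then $G_s=\Delta G+b|\nabla G|^2+h$, where $h=\lambda^2/F(u)\in(0,C]$ is bounded by the type~I bound, and $G\le C$ on the cylinder.

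Next I would apply the maximum principle to $Q=\eta^2|\nabla G|^2$ (possibly $\eta^2|\nabla G|^2\phi(G)$ with a weight $\phi$ chosen to absorb the cross terms), with $\eta$ a space-time cutoff. The aim is a bound $|\nabla G|\le C$ on the inner cylinder, which is exactly \eqref{eq:2.2} after scaling back.

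\textbf{Main obstacle.} Differentiating the equation produces $\nabla b=(f''F-f'/f)\nabla u$, and $f$ is only $C^1$. I would try first to bypass this by mollifying $f$. A cleaner alternative is to avoid differentiating $b$ altogether: treat $G_s-\Delta G=b|\nabla G|^2+h$ as a quasilinear equation with bounded measurable coefficients and natural quadratic gradient growth. For that class, the Ladyzhenskaya--Ural'tseva interior gradient estimates apply once $\operatorname{osc}G$ is bounded.

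\textbf{Lower bound on $G$.} The upper bound $G\le C$ comes from type~I. A lower bound is also needed when $u$ is not large, and there I would use that $u$ is bounded below by $0$, so $F(u)\le F(0)$ whenever $F(0)<\infty$, or otherwise an interior comparison. I expect this step, controlling both the gradient and the possibly degenerate lower side of $G$ uniformly in $(x_0,t_0)$, to be the hardest part.

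\textbf{Step 2: the comparison estimate \eqref{eq:2.3}.} On the rescaled cylinder we now have $|\nabla G|\le C$ and $0<h\le C$, so $G_s-\Delta G$ is bounded in $L^\infty$. The spatial part of \eqref{eq:2.3} follows by integrating \eqref{eq:2.2} along the segment from $x$ to $\xi$; here I take $t_K$ close to $T$ so that $B_{2\sqrt{T-t}}(x)\subset K'$. For the temporal part, $\tau$ ranges over $T-2(T-t)\le\tau\le T-(T-t)/4$, which corresponds to $s$ in a fixed compact interval. Bounded $|G_s-\Delta G|$ and bounded $|\nabla G|$ give a bounded oscillation of $G$ in $s$, via a weak formulation tested against a fixed bump (or a comparison with $\pm Cs$ plus the heat flow of the initial oscillation). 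Together these yield $|G(y,s)-G(0,0)|\le C$ on the cylinder, which is \eqref{eq:2.3}.
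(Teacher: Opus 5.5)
Your Step~2 is sound and is essentially the paper's argument: from \eqref{eq:2.2} and the type~I bound you get $|\partial_t g-\Delta g|\le C/(T-t)$, and averaging against a bump of radius $2\sqrt{T-t}$ controls the time oscillation.

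The gap is in Step~1. Any interior estimate you run on the parabolic cylinder $|y|\le2$, $-1\le s\le\frac12$ (Ladyzhenskaya--Ural'tseva for natural quadratic growth, or Bernstein with a cutoff) has a constant that depends on $\operatorname{osc}G$ over that cylinder. A bound on this oscillation that is uniform in $(x_0,t_0)$ is exactly \eqref{eq:2.3}, which you only derive afterwards from Step~1, so the argument is circular. Your proposed lower bound does not break the circle. From $u\ge d$ you only get $G\ge\log\bigl((T-t_0)/F(d)\bigr)$, i.e.\ $\operatorname{osc}G\le C+\log\frac{1}{T-t_0}$. Even in the most favourable case $b\equiv0$, this yields only $|\nabla g|\le C\log\frac{1}{T-t}\big/\sqrt{T-t}$.

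No argument confined to the parabolic cylinder can remove this logarithm. Take $f(u)=e^u$, so that $g=u$. Let $\Lambda=\log\lambda^{-1}$ and $H(y,s)=\frac12+\frac{y_1}{8}$, and solve $u_t=\Delta u+e^u$ on the cylinder with the parabolic boundary values of $d+\Lambda H\bigl((x-x_0)/\lambda,(t-t_0)/\lambda^2\bigr)$. In rescaled variables the forcing is $\lambda^2e^u=O(\lambda)$, so this solution and its gradient at the centre differ from those of $d+\Lambda H$ by $O(\lambda)$. It obeys the type~I bound and $u\ge d$ on the cylinder, yet $|\nabla u(x_0,t_0)|\approx\Lambda/(8\lambda)$.

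The missing idea is to use the solution on a \emph{fixed} ball over a \emph{fixed} time interval $[t_1,t]$. There the oscillation of $z$ is only logarithmic, and a logarithmic modulus can match it. The paper applies a two-point (Kruzhkov/Andrews--Clutterbuck) maximum principle on $B_R(a)$ to
$Z=z(x,t)-z(y,t)-A\log\bigl(1+k|x-y|/\sqrt{T-t}\bigr)-b_0(t)\bigl(|x-a|^2+|y-a|^2\bigr)$. The ingredients work as follows.
\begin{itemize}
\item The choice $b_0R^2=m(t)+1$, with $m(t)=C_0+\log\frac{1}{T-t}$, keeps positive maxima off the boundary.
\item The resulting errors $O(b_0+b_0^2R^2)$ are at most $1/(T-t)$ for $t$ near $T$.
\item The modulus satisfies $\phi_t-\phi_{rr}\ge cA/(T-t)$, which beats the forcing $(M+1)/(T-t)$ once $A$ is large.
\item The constant $k$ only has to dominate $|\nabla z|$ at the fixed time $t_1$.
\end{itemize}
This argument never differentiates the equation, so your $C^1$ concern disappears. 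Instead it needs the quadratic-gradient terms at the two points to cancel to leading order. That is why the paper replaces $fF$ by $\widehat h_f$ with $|\widehat h_f'-(q_f-1)|\le\delta$ on all of $[0,\infty)$, and works with $z=\int_0^{u}\frac{ds}{\widehat h_f(s)}$ rather than with $g$. For $g$, the coefficient $b$ is close to $q_f-1$ only where $u$ is large.
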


\begin{remark}
The basic idea of the proof is to derive \eqref{eq:2.2} by applying
a maximum principle argument to the logarithmic variable
\[
z(x,t):=\log \frac{1}{F(u(x,t))}.
\]
A direct calculation gives
\begin{equation}
    \notag
\partial_t z-\Delta z
=(f'(u)F(u)-1)|\nabla z|^2+\frac{1}{F(u)}.
\end{equation}
The coefficient of $|\nabla z|^2$ converges to $q_f-1$ as $u\to\infty$.
We wish to exploit this behavior for large $u$, but the maximum principle
argument also involves points where $u$ is not large.
We therefore modify $h_f=fF$ only for bounded values of its argument,
replacing it with a positive function $\widehat h_f\in C^1([0,\infty))$
that agrees with $h_f$ for sufficiently large arguments and whose derivative
is uniformly close to $q_f-1$ on $[0,\infty);$ see \eqref{eq:2.6}.
We then use the modified function
\[
z(x,t):=\int_0^{u(x,t)}\frac{1}{\widehat h_f(s)}\,ds.
\]
For large $u$, this function agrees with $\log(1/F(u))$ up to an additive
constant. Thus, the modification preserves the logarithmic change of
variables where $u$ is large, while providing uniform control of the
quadratic gradient coefficient throughout the maximum principle argument.
After estimating the gradient of the modified function, we transfer the
bound back to $\log(1/F(u))$, as detailed below.
\end{remark}

\begin{proof}
By the type~I assumption, fix $M\ge1$ such that  
\begin{equation}
  \label{eq:2.4}
  \sup_{x\in\Omega}\frac{T-t}{F(u(x,t))}\le M
\end{equation}
for 
$t$ sufficiently close to $T$.
Choose $A\ge1$ such that
\begin{equation}
 \label{eq:2.5}
 A\inf_{\rho\ge0}
 \left\{\frac{\rho}{2(1+\rho)}+\frac{1}{(1+\rho)^2}\right\}>M+2,
\end{equation}
and let $\delta\in(0,1]$ be sufficiently small depending only on $A$ and
$q_f$, as specified in the quadratic gradient estimate below.
For the change of variables, put $h_f(u)=f(u)F(u)$. Then $h_f'(u)=f'(u)F(u)-1\to q_f-1$ as $u\to \infty$, 
and there 
exist $U_0>0$ and a positive $\widehat h_f\in C^1([0,\infty))$ such that
\begin{equation}
  \label{eq:2.6}
 \widehat h_f=h_f \quad \mbox{on} \quad [U_0,\infty),\qquad
 |\widehat h_f'-(q_f-1)|\le\delta \quad \mbox{on} \quad [0,\infty).
\end{equation}
Using this extension, define \[ H(u)=\int_0^u \frac{1}{\widehat h_f(s)}\,ds, 
\qquad z(x,t) = 
H(u(x,t)).\] 
A direct computation gives
\begin{equation}
  \label{eq:2.7}
  \partial_t z-\Delta z=(q_f-1+\theta(x,t))|\nabla z|^2
  +\frac{f(u)}{\widehat h_f(u)},
\end{equation}
where $\theta(x,t):=\widehat h_f'(u(x,t))-(q_f-1)$.
By \eqref{eq:2.4}, \eqref{eq:2.6}, and the identity
$(H+\log F)'=0$ on $[U_0,\infty)$, there is $C_0>0$ such that,
for all $x\in\Omega$ and $t$ sufficiently close to $T$,
\begin{gather}
  \label{eq:2.8}
 |\theta|\le\delta,\qquad
 0\le\frac{f(u)}{\widehat h_f(u)}\le\frac{M+1}{T-t},
 \qquad 
 0\le z\le m(t):=C_0+\log\frac1{T-t}. 
\end{gather}
Indeed, for $u\ge U_0$,
\[
 \frac{f(u)}{\widehat h_f(u)}=\frac1{F(u)}\le\frac M{T-t}, 
 \quad 
 H(u)-H(U_0)
 =\log\frac{F(U_0)}{F(u)}
 \le \log\bigl(MF(U_0)\bigr)+\log\frac1{T-t}.
\]
On $[0,U_0]$, both $H$ and $f/\widehat h_f$ are bounded and
nonnegative, so the same bounds follow by taking $C_0$ sufficiently
large and $t$ sufficiently close to $T$.

With these bounds in hand, we estimate $z(x,t)-z(y,t)$ by applying
{\color{black}a maximum principle argument}.
See \cite{AndrewsClutterbuck2009}*{Section~2} for an exposition of this method,
which goes back to \cite{Kruzhkov1979}.
To apply this argument, write
$K_\varrho:=\{x\in \Omega:\, \operatorname{dist}(x,K)\le\varrho\}$
for $\varrho>0$ and choose $R>0$ such that $K_{4R}\Subset\Omega$.
For any $a\in K_{2R}$, consider the ball $B_R(a)$ and set
\[
 b_0(t)=\frac{m(t)+1}{R^2},\qquad
 \phi(r,t)=A\log\left(1+\frac{kr}{\sqrt{T-t}}\right),
\]
where $k\ge 1$ is a constant. 
{\color{black}Define}
\[
 Z(x,y,t)=z(x,t)-z(y,t)-\phi(|x-y|,t)
                   -b_0(t)(|x-a|^2+|y-a|^2).
\]
It is nonpositive when the two spatial points coincide, and at most $-1$
when either point lies on the boundary of the ball $B_R(a)$.
Consequently, any positive maximum of $Z$ must occur at two distinct
points inside $B_R(a)$. 

To choose the starting time for the maximum principle argument, let
$C_1=C_1(N,A,q_f)>0$ be sufficiently large for the estimates below.
Its choice is independent of $a$, $k$, and the starting time.
For a fixed compact set $K'\Subset\Omega$ and $t_1>0$, 
it follows from the maximum principle that 
\begin{equation}
  \label{eq:2.9}
  d_{K',t_1} := \inf\{u(x,t):\, (x,t)\in K'\times[t_1,T)\}>0.
\end{equation}
Since $b_0=O(|\log(T-t)|)$ as $t\uparrow T$
by \eqref{eq:2.8}, 
choose $t_1\in(0,T)$ sufficiently close to $T$, 
independent of $a,k$,
so that  
\begin{equation}
  \label{eq:2.10}
  (T-t)C_1(b_0+b_0^2R^2)\le 1  
  \quad\mbox{for}\quad t\ge t_1, 
\end{equation} 
and \eqref{eq:2.9} with $K'=K_{2R}$ and \eqref{eq:2.8} hold. 
Furthermore, set
$L_1:=\sup_{K_{3R}}|\nabla z(\cdot,t_1)|$ 
and choose $k\ge1$ with $\phi(2R,t_1)\ge2RL_1$.
The concavity of the logarithm gives
\[ \phi(r,t_1) \ge \left(1-\frac{r}{2R}\right)\phi(0,t_1) + \frac{r}{2R}\phi(2R,t_1) \ge  L_1r 
\quad\mbox{for}\quad r\in [0,2R], \]
yielding $Z(x,y,t_1)\le0$ for $x$, $y\in B_R(a)$.

Suppose, for contradiction, that $Z$ is positive for some pair of points
in $B_R(a)$ at some time $t_2\in (t_1,T)$.
Take the maximum of $Z$ over $\overline{B_R(a)}\times\overline{B_R(a)}\times [t_1,t_2]$, 
and denote a maximum point by $(x,y,t)$.
Then $t_1<t\le t_2$, $x,y\in B_R(a)$, and $x\ne y$.
At this point, 
$\nabla_x Z=\nabla_y Z=0$, 
$Z_t\ge0$, and 
$D_{x,y}^2 Z$ is negative semidefinite.
Write $r=|x-y|$ and $e=(x-y)/r$.
In the following estimates, $\phi_r$, $\phi_{rr}$, and $\phi_t$ are
evaluated at $(r,t)$, and $b_0$ and $b_0'$ at $t$.
{\color{black}Since $\nabla_x Z=\nabla_y Z=0$ at the maximum point, we obtain}
\begin{equation}
  \label{eq:2.11}
  \nabla_x z(x,t)=\phi_r e+2b_0(x-a),\qquad
  \nabla_y z(y,t)=\phi_r e-2b_0(y-a).
\end{equation}
We show that the nonpositivity of $D_{x,y}^2 Z$ gives
\begin{equation}
  \label{eq:2.12}
 \Delta_x z(x,t)-\Delta_y z(y,t)\le4\phi_{rr}+4Nb_0.
\end{equation}
To verify \eqref{eq:2.12}, let $\{e_j\}_{j=1}^{N-1}$ be an
orthonormal basis of the orthogonal complement of $e$, and write
$Z_{vv}=v^\mathsf{T} (D_{x,y}^2 Z)\, v
$.
Define the column vectors
\[
 E=\begin{pmatrix}e\\-e\end{pmatrix},\qquad
 E_j=\begin{pmatrix}e_j\\e_j\end{pmatrix}
 \quad\mbox{for}\quad j=1,\ldots,N-1.
\]
Since $|(x+se)-(y-se)|=r+2s$ for sufficiently small $s$, the
nonpositivity of $D_{x,y}^2Z$ gives
\begin{align*}
 0&\ge E^{\mathsf{T}} (D_{x,y}^2Z) \,E
  =\left.\frac{d^2}{ds^2}Z(x+se,y-se,t)\right|_{s=0}\\
  &=z_{ee}(x,t)-z_{ee}(y,t)-4\phi_{rr}-4b_0.
\end{align*}
On the other hand, $|(x+se_j)-(y+se_j)|=r$, so for each
$j=1,\ldots,N-1$ we have
\begin{align*}
 0&\ge E_j^{\mathsf{T}} (D_{x,y}^2Z) \,E_j
  =\left.\frac{d^2}{ds^2}Z(x+se_j,y+se_j,t)\right|_{s=0}\\
  &=z_{e_je_j}(x,t)-z_{e_je_j}(y,t)-4b_0.
\end{align*}
Summing these $N$ inequalities and using
$\Delta z=z_{ee}+\sum_{j=1}^{N-1}z_{e_je_j}$
yields the claimed bound.
We next estimate the quadratic gradient terms. By \eqref{eq:2.8}
and \eqref{eq:2.11},
\begin{align*}
 (q_f-1+\theta(x,t))|\nabla_x z(x,t)|^2
 -(q_f-1+\theta(y,t))|\nabla_y z(y,t)|^2
 \le C(b_0R\phi_r+b_0^2R^2+\delta\phi_r^2),
\end{align*}
where $C$ depends only on $q_f$; here we have used $\delta\le1$.
Young's inequality and a sufficiently small choice of $\delta$, depending
only on $A$ and $q_f$, therefore give
\begin{equation}
 \label{eq:2.13}
 (q_f-1+\theta(x,t))|\nabla_x z(x,t)|^2
 -(q_f-1+\theta(y,t))|\nabla_y z(y,t)|^2
 \le\frac{\phi_r^2}{A}+C_1b_0^2R^2.
\end{equation}
The constant $C_1$ can be chosen depending only on $N,A,q_f$ and large
enough to absorb the coefficient of $b_0$ in \eqref{eq:2.12} as well.
In particular, these choices can be made before $t_1$ and $k$ are fixed.
Since
\[
 \phi_{rr}=-\frac{\phi_r^2}{A}\le0,
 \qquad 4\phi_{rr}+\frac{\phi_r^2}{A}\le\phi_{rr},
\]
we obtain from $Z_t\ge0$, $b_0'\ge0$, \eqref{eq:2.7},
\eqref{eq:2.8}, \eqref{eq:2.10}, \eqref{eq:2.12}, and \eqref{eq:2.13} that
\[
 \phi_t-\phi_{rr}
 \le C_1(b_0+b_0^2R^2)+\frac{M+1}{T-t}
 \le\frac{M+2}{T-t}.
\]
On the other hand, putting $\rho=kr/\sqrt{T-t}$ and using $k\ge1$, we have
\[
 \phi_t-\phi_{rr}
 =\frac{A}{T-t}
 \left\{\frac{\rho}{2(1+\rho)}+\frac{k^2}{(1+\rho)^2}\right\}
 \ge \frac{A}{T-t}
 \inf_{\rho\ge0}
 \left\{\frac{\rho}{2(1+\rho)}+\frac{1}{(1+\rho)^2}\right\}
 >\frac{M+2}{T-t}
\]
by \eqref{eq:2.5}.
These inequalities yield a contradiction.
Hence $Z(x,y,t)\le0$ for all
$x,y\in\overline{B_R(a)}$ and $t_1\le t<T$.
For each $x\in K_{2R}$, take the center $a=x$. Substituting
$(x,x+\varepsilon \eta)$ and $(x+\varepsilon \eta,x)$ into the inequality
$Z\le0$, dividing by $\varepsilon>0$, and letting
$\varepsilon\downarrow0$, for any unit vector $\eta$, yields 
\begin{equation}
  \label{eq:2.14}
  |\nabla z(x,t)|\le \frac{Ak}{\sqrt{T-t}} 
\end{equation} 
for all $(x,t)\in K_{2R}\times [t_1,T)$. 

We now transfer this gradient bound to the original normalization.
Since $\widehat h_f=h_f$ for large $u$, 
both $\widehat h_f(u(x,t))/h_f(u(x,t))$ and its reciprocal are bounded on
$K_{2R} \times [t_1,T)$ by \eqref{eq:2.9}. Set $v=-\log F(u)$. Then \eqref{eq:2.14} gives
\begin{equation*}
 |\nabla v|=\frac{\widehat h_f(u)}{h_f(u)}|\nabla z|
  \le\frac C{\sqrt{T-t}}.
\end{equation*}
This proves \eqref{eq:2.2}.
To prove the space-time comparison \eqref{eq:2.3}, we first note that the identity
\[ 
\partial_t v-\Delta v=(f'(u)F(u)-1)|\nabla v|^2+\frac1{F(u)}
\] 
together with \eqref{eq:2.4} and \eqref{eq:2.14} yields
\begin{equation}
  \label{eq:2.15}
  |\partial_t v-\Delta v|\le \frac{C}{T-t}
\end{equation}
on $K_{2R} \times [t_1,T)$. 
We then compare different times by spatial averaging. Fix $x\in K$ and take $t$ sufficiently close to $T$ that
$T-2(T-t)\ge t_1$ and $2\sqrt{T-t}<R$.
Choose a fixed $\eta\in C_c^\infty(B_1)$ with $\eta\ge0$ and $\|\eta\|_{L^1(B_1)}=1$,
and set $\eta_\lambda(y)=\lambda^{-N}\eta(y/\lambda)$ for $\lambda>0$.
For $T-2(T-t)\le\sigma\le T-(T-t)/4$, put
\[
 \overline{v}(\sigma)=\int_\Omega \eta_{2\sqrt{T-t}}(y-x)v(y,\sigma)\,dy. 
\]
Here
$(T-t)/4\le T-\sigma\le2(T-t)$, so \eqref{eq:2.2} gives 
\begin{equation}
  \label{eq:2.16}
  \sup_{|y-x|\le2\sqrt{T-t}}|v(y,\sigma)-v(x,\sigma)|
  \le C\frac{2\sqrt{T-t}}{\sqrt{T-\sigma}} \le C,
\end{equation}
yielding 
$|\overline{v}(\sigma)-v(x,\sigma)|\le C$. 
Since
\[ \int_\Omega \left( \Delta\eta_{2\sqrt{T-t}}\right)(y-x)\, dy =0, \]
integration by parts, \eqref{eq:2.15}, and \eqref{eq:2.16} give 
\begin{align*}
 &|\overline{v}'(\sigma)|
 \\ 
 &= \left| 
 \int_\Omega \eta_{2\sqrt{T-t}}(y-x) \left(\partial_t v- \Delta v + \Delta v\right)(y,\sigma)\, dy - 
 \int_\Omega \left( \Delta\eta_{2\sqrt{T-t}}\right)(y-x) v(x,\sigma)\, dy \right| 
 \\
 &\le \frac{C}{T-t} + \int_\Omega \left| \left( \Delta\eta_{2\sqrt{T-t}}\right)(y-x)(v(y,\sigma)-v(x,\sigma))\right|\, dy 
 \le \frac{C}{T-t}. 
\end{align*}
Finally, integrating between $t$ and $\tau$ and using
$|t-\tau|\le C(T-t)$, we obtain
\[
|v(\xi,\tau) - v(x,t)| 
\le |v(\xi,\tau) - \overline{v}(\tau)| + |\overline{v}(\tau)-\overline{v}(t)| + 
|\overline{v}(t) - v(x,t)| \le C 
\]
for all $|\xi-x|\le 2\sqrt{T-t}$ and $T-2(T-t)\le\tau\le T-(T-t)/4$, 
which is \eqref{eq:2.3}.
All required cylinders lie in the fixed interior region 
if $t$ is sufficiently close to $T$.
Choosing $t_K$ so that \eqref{eq:2.2} and \eqref{eq:2.3} hold for every $t>t_K$
completes the proof.
\end{proof}

\begin{lemma}
\label{Lemma:2.2}
Let $Q\subset\mathbb R^N\times\mathbb R$ be a bounded open parabolic
cylinder and let $Q'\Subset Q$ be a compact cylinder.
Let $I\subset\mathbb R$ be a compact interval, and suppose that
$A\in C(I)$ and $g\in C^1(I)$ satisfy
\[
 |I|\le L,\qquad
 \|A\|_{L^\infty(I)}+\|g\|_{L^\infty(I)}
 +\|g'\|_{L^\infty(I)}\le L,
\]
for some $L\ge1$,
where $|I|$ denotes the length of $I$.
If $v\in C^{2,1}(Q)$ with $v(Q)\subset I$ satisfies
\[
 \partial_\tau v-\Delta v=g(v)+A(v)|\nabla v|^2
 \quad\mbox{in}\quad Q,
\]
then
\[
 \sup_{Q'}\bigl(|\nabla v|+|D^2v|+|\partial_\tau v|\bigr)\le C,
\]
where $C$ depends only on $N,L,Q,Q'$.
\end{lemma}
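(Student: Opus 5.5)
The plan is to reduce the quasilinear equation to a semilinear one by a Cole--Hopf/Kruzhkov-type change of unknown, which removes the quadratic gradient term. Standard interior parabolic estimates then finish the proof.

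\emph{Step 1 (change of unknown).} Pick $c\in I$ and set
\[
\Phi(v)=\int_{c}^{v}\exp\Bigl(-\int_c^{\sigma}A(\rho)\,d\rho\Bigr)\,d\sigma,
\qquad w=\Phi(v).
\]
Since $|I|\le L$ and $\|A\|_\infty\le L$, we have $e^{-L^2}\le\Phi'\le e^{L^2}$ on $I$. Moreover $\Phi''=-A\Phi'$, so $\Phi\in C^2(I)$ with bounds depending only on $L$. A direct computation gives
\[
\partial_\tau w-\Delta w=\Phi'(v)\bigl(\partial_\tau v-\Delta v\bigr)-\Phi''(v)|\nabla v|^2=\Phi'(v)g(v)=:G(w).
\]
Here $G=(\Phi'g)\circ\Phi^{-1}$ is defined on the interval $\Phi(I)$, whose length is at most $Le^{L^2}$. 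We have $\|G\|_\infty\le C(L)$. Because $g\in C^1$ and $\Phi'$ is Lipschitz with constant $\le Le^{L^2}$, $G$ is Lipschitz with constant $C(L)$. Also $|w|\le Le^{L^2}$. If one worries that $A$ is only continuous, then $\Phi$ is $C^2$, and this suffices for the chain rule applied to $v\in C^{2,1}$.

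\emph{Step 2 (interior estimates for $w$).} Now $w$ is a bounded classical solution of $w_\tau-\Delta w=G(w)$ with a bounded right-hand side. Choose intermediate cylinders $Q'\Subset Q_2\Subset Q_1\Subset Q$. Interior $L^p$ estimates on $Q_1$ with $p>N+2$ give $\|w\|_{W^{2,1}_p(Q_1)}\le C$. By parabolic Sobolev embedding, $\nabla w$ is bounded and $w$ is Hölder continuous in $(x,\tau)$ with exponent $\alpha$ in parabolic metric. Since $G$ is Lipschitz, $G(w)$ is then $C^{\alpha,\alpha/2}$ on $Q_1$ with norm $\le C$. Interior Schauder estimates on $Q_2$ then bound $\|w\|_{C^{2+\alpha,1+\alpha/2}(Q')}$. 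All constants depend only on $N$, $L$ and the geometry of $Q,Q'$ (through the chosen intermediate cylinders).

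\emph{Step 3 (back to $v$).} Write $v=\Phi^{-1}(w)$ with $(\Phi^{-1})'=1/\Phi'(v)\le e^{L^2}$, and note that $(\Phi^{-1})''=-\Phi''/\Phi'^3=A/\Phi'^2$ is bounded by $Le^{2L^2}$. Hence
\[
|\nabla v|\le C|\nabla w|,\qquad |\partial_\tau v|\le C|\partial_\tau w|,\qquad |D^2v|\le C\bigl(|D^2w|+|\nabla w|^2\bigr)
\]
on $Q'$, which gives the claim.

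The only delicate point is Step 1, namely making sure the bounds depend only on $L$. This requires the Lipschitz bound on $G$, which needs $g'$ and the Lipschitz bound of $\Phi'$, and the latter follows from $\Phi''=-A\Phi'$. It also requires the length bound $|I|\le L$ to control $\Phi'$. The rest is textbook interior regularity.
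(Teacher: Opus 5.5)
Your plan is the same as the paper's: remove the quadratic gradient term via $w=\Phi(v)$, apply interior H\"older and Schauder estimates to $w$, and transfer the bounds back to $v$. However, Step~1 has a sign error that breaks the key cancellation as written. With $\Phi'(v)=\exp\bigl(-\int_c^vA\bigr)$ you have $\Phi''=-A\Phi'$, and substituting the equation gives
\[
\partial_\tau w-\Delta w=\Phi'(v)\bigl(g(v)+A(v)|\nabla v|^2\bigr)-\Phi''(v)|\nabla v|^2=\Phi'(v)g(v)+2A(v)\Phi'(v)|\nabla v|^2 .
\]
So the gradient term is doubled rather than removed, and the bounded-right-hand-side regularity you use in Step~2 is not available. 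The cancellation requires $\Phi''=A\Phi'$, i.e.\ $\Phi'(v)=\exp\bigl(\int_c^vA\bigr)$, which is exactly the paper's choice.

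After this correction, the rest of your argument goes through with the same constants:
\begin{itemize}
\item $e^{-L^2}\le\Phi'\le e^{L^2}$ and $|\Phi''|\le Le^{L^2}$ on $I$;
\item $G'(w)=g'(v)+A(v)g(v)$ is bounded by $L+L^2$;
\item in Step~3, $(\Phi^{-1})''=-A/\Phi'^2$ has the same modulus as your expression.
\end{itemize}

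The only other difference from the paper is cosmetic. You obtain H\"older continuity of $w$ from interior $W^{2,1}_p$ estimates ($p>N+2$) and parabolic embedding. The paper instead quotes the interior H\"older estimate for equations with bounded right-hand side, then applies the interior Schauder estimate. Both routes give constants depending only on $N$, $L$, $Q$, $Q'$.
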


\begin{proof}
If $I$ consists of a single point, the assertion is immediate.
Otherwise, we remove the quadratic gradient term by a change of variables. To this end, fix $v_0\in I$ and define
\[
 \Phi(v)=\int_{v_0}^v
 \exp\left(\int_{v_0}^u A(s)\,ds\right)du.
\]
We may extend $A$ constantly beyond the endpoints of $I$ when defining
$\Phi$. On $I$ we have
\begin{equation}
  \label{eq:2.17}
 e^{-L^2}\le\Phi'\le e^{L^2},\qquad
 |\Phi|+|\Phi''|\le2Le^{L^2},\qquad \Phi''=A\Phi'.
\end{equation}
Setting $w=\Phi(v)$, we obtain
\[
 \partial_\tau w-\Delta w=
 \Phi'(v) (\partial_\tau v-\Delta v) - \Phi''(v)|\nabla v|^2
 =
 \Phi'(v)g(v) 
 =:
 \mathcal G(w). 
\]
Since $\mathcal G(w) = \Phi'(\Phi^{-1}(w))g(\Phi^{-1}(w))$, 
we obtain 
\[
 \mathcal G'(w)=g'(v)+A(v)g(v),  
\]
where $v=\Phi^{-1}(w)$. 
Consequently, the assumptions of the lemma and
\eqref{eq:2.17} imply that $w$ and $\mathcal G(w)$ are uniformly bounded and that
$\mathcal G$ is Lipschitz on $\Phi(I)$ with a constant depending only on $L$.
To apply interior regularity, choose an intermediate cylinder $Q_1$ with $Q'\Subset Q_1\Subset Q$.
The interior H\"older estimate for a bounded right-hand side yields
$\|w\|_{C^{\alpha,\alpha/2}(Q_1)}\le C$ for some $\alpha\in(0,1)$ (see, for example, \cite{LSU1968}*{Chapter~III, Theorem~10.1}).
The interior Schauder estimate,
\cite{LSU1968}*{Chapter~IV, Theorem~10.1}, therefore implies
\[
 \|w\|_{C^{2+\alpha,1+\alpha/2}(Q')}\le C.
\]
Finally, the identities
\[
 \nabla v=\frac{\nabla w}{\Phi'(v)},\qquad
 \partial_\tau v=\frac{\partial_\tau w}{\Phi'(v)},\qquad
 D^2 v=\frac{D^2 w-\Phi''(v)\nabla v\otimes\nabla v}{\Phi'(v)}, 
\]
together with \eqref{eq:2.17} prove the assertion.
\end{proof}

We now combine the local comparison estimate \eqref{eq:2.3}
with the interior regularity estimates of Lemma~\ref{Lemma:2.2}
to prove Proposition~\ref{Proposition:2.1}.

\begin{proof}[Proof of Proposition~\ref{Proposition:2.1}]
Fix $a\in\Omega$, set
$\widetilde R=\min\{\operatorname{dist}(a,\partial\Omega),\sqrt T\}$,
and fix $0<r<\widetilde R$. It is enough to prove the estimates on
$B_r(a)$ at sufficiently late times. 
To this end, for $x\in B_r(a)$ and $t$ sufficiently close to $T$, set
\begin{equation}\label{eq:2.18}
 \lambda=\left(\frac{2(T-t)}{\widetilde R^2}\right)^\frac{1}{2},
 \qquad 
 w(z,\tau)=\frac{\lambda^2}
 {F\bigl(u(x+\lambda z,(1-\lambda^2)T+\lambda^2\tau)\bigr)}.
\end{equation}
If $2(T-t)<(\widetilde R-r)^2$, this is defined on
$B_{\widetilde R}\times[T-\widetilde R^2,T)$.
In the rescaled equations below, $u$ is evaluated at
$(x+\lambda z,(1-\lambda^2)T+\lambda^2\tau)$.
A direct calculation yields 
\begin{equation}\label{eq:2.19}
  \begin{aligned}
    \partial_\tau w-\Delta_z w
    & = w^2+(f'(u)F(u)-2)\frac{|\nabla_z w|^2}{w}
    \\
    &=w^2+(q_f-2)\frac{|\nabla_z w|^2}{w}
    +(f'(u)F(u)-q_f)\frac{|\nabla_z w|^2}{w}.
  \end{aligned}
\end{equation}
By the type~I bound for $u$, there exists $M\ge1$,
independent of $x$ and $t$, such that
\begin{equation}
  \label{eq:2.20}
  0<w(z,\tau)\le \frac{M}{T-\tau}
\end{equation}
for all $(z,\tau)\in
B_{\widetilde R}\times[T-\widetilde R^2,T)$,
provided $t$ is sufficiently close to $T$.

To apply the two preceding lemmas on fixed cylinders, consider the
closed cylinders $Q\Subset Q_+^\circ$ defined by
\begin{align*}
 Q_+=\overline B_{3\widetilde R/4}\times
  \left[T-\frac{7}{8}\widetilde R^2,T-\frac{1}{8}\widetilde R^2\right],\quad 
 Q=\overline B_{\widetilde R/2}\times
  \left[T-\frac{3}{4}\widetilde R^2,T-\frac14 \widetilde R^2\right].
\end{align*}
Put
\[
 \tau_c=T-\widetilde R^2/2,\qquad
 u_c=u(x,t),\qquad w_c=w(0,\tau_c)=\frac{\lambda^2}{F(u_c)},
 \qquad W=\frac{w}{w_c}.
\]
We now show that
\begin{equation}\label{eq:2.21}
 e^{-C}\le W\le e^C\quad\hbox{on }Q_+,
 \qquad 0<w_c\le\frac{2M}{\widetilde R^2}.
\end{equation}
Setting $\xi=x+\lambda z$ and $s=(1-\lambda^2)T+\lambda^2\tau$, we obtain
\[
W=\frac{F(u(x,t))}{F(u(\xi,s))}.
\]
Applying~\eqref{eq:2.3} in Lemma~\ref{Lemma:2.1} to $W$ on $Q_+$,
we obtain $e^{-C}\le W \le e^C$ on $Q_+$.
Furthermore, \eqref{eq:2.20} gives
the latter inequality in \eqref{eq:2.21}.
Dividing \eqref{eq:2.19} by the constant $w_c$ gives
\begin{equation}\label{eq:2.22}
 \partial_\tau W-\Delta W
 =w_cW^2+\frac{f'(u)F(u)-2}{W}|\nabla W|^2.
\end{equation}
To put this equation in the form required by Lemma~\ref{Lemma:2.2}, set
$I:=W(Q_+)$, which is a compact interval containing $1$.
For $W\in I$, define
\[
 U(W)=F^{-1}\left(\frac{F(u_c)}{W}\right),\quad
 g(W) = w_c W^2, \quad 
 A(W)=\frac{f'(U(W))F(U(W))-2}{W}.
\]
For 
$t$ sufficiently close to $T$,
the images of the spatial projections of
$Q$ and $Q_+$ under $z\mapsto x+\lambda z$ lie in a fixed compact
subset of $\Omega$.
Thus, \eqref{eq:2.9} gives $u\ge d>0$ for some $d>0$.
In particular, $U(W)\ge d$ for $W\in I$, so 
\[
 |A(W)|\le(B+2)e^C,\qquad
 |g(W)|+|g'(W)|
 \le\frac{2M}{\widetilde R^2}(e^{2C}+2e^C)
\]
for $W\in I$, 
where $B=\sup\{|f'(u)F(u)|:\, u\ge d\}<\infty$. 
Furthermore, \eqref{eq:2.21} gives $I\subset[e^{-C},e^C]$.
We can therefore apply Lemma~\ref{Lemma:2.2}
to \eqref{eq:2.22} on the open cylinder $Q_+^\circ$ with inner cylinder
$Q$ to obtain
\[
 \sup_{Q}\bigl(|\nabla W|+|D^2W|+|\partial_\tau W|\bigr)\le C,
\]
with $C$ independent of $x$ and $t$.
Since $w=w_cW$ and $W\ge e^{-C}$, we conclude that
\begin{equation}\label{eq:2.23}
 \sup_{Q}
 \frac{|\nabla w|+|D^2w|+|w_\tau|}{w}\le C.
\end{equation}
{\color{black}We now return to the original variables.} At $(z,\tau)=(0,\tau_c)$, we have
\begin{gather*}
 \frac{\nabla_z w}{w}=\lambda\frac{\nabla_xu}{f(u)F(u)},\qquad
 \frac{\partial_\tau w}{w}=\lambda^2\frac{\partial_t u}{f(u)F(u)},\\
 \frac{D_z^2w}{w}=
 \lambda^2\left\{\frac{D_x^2u}{f(u)F(u)}
        +(2-f'(u)F(u))\frac{\nabla_xu\otimes\nabla_xu}{\bigl(f(u)F(u)\bigr)^2}\right\}.
\end{gather*}
It follows from \eqref{eq:2.23} that
\[
 \frac{|\nabla u(x,t)|}{f(u(x,t))F(u(x,t))}\le C\lambda^{-1},\qquad
 \frac{|D^2u(x,t)|+|\partial_t u(x,t)|}{f(u(x,t))F(u(x,t))}\le C\lambda^{-2}.
\]
{\color{black}Together with \eqref{eq:2.18}, these estimates yield the required bounds
on $B_r(a)$ 
for $t$ sufficiently close to $T$.}
Finally, covering any given compact set by finitely many such balls and applying
standard interior regularity on the remaining closed time interval proves \eqref{eq:2.1}.
\end{proof}

\subsection{\textcolor{black}{Quasi-similarity transformation} and energy functional}
\label{Subsection:2.2}
Having established the local derivative estimates, we now introduce the \textcolor{black}{quasi-similarity transformation} and the associated energy functional.
Let $a\in\Omega$ be an interior blow-up point and let $T$ be the blow-up time.
We define \textcolor{black}{the normalized function $v$ and the similarity variables $y,s$} by
\begin{equation}
\label{eq:2.24}
    v(y,s):=\frac{T-t}{F\bigl(u(a+y\sqrt{T-t},t)\bigr)},
    \qquad
    y:=\frac{x-a}{\sqrt{T-t}},\qquad s:=-\log(T-t). 
\end{equation} 
By the strong maximum principle, $u>0$ in $\Omega\times(0,T)$.
Hence $v$ is positive on its domain of definition.
For $b\in\Omega$, let $v_b$ denote the rescaled function in \eqref{eq:2.24}
with $a$ replaced by $b$; in particular, $v=v_a$.
A direct computation shows that $v$ satisfies \eqref{eq:1.6}.

To localize the energy near the blow-up point, let $\alpha\in(0,1)$ and choose a sufficiently large $s_0>\max\{0,-\log T\}$ such that
\[
    \overline{B_{s^\alpha e^{-\frac{s}{2}}}(a)}\subset\Omega
    \qquad\text{for all }s\ge s_0.
\]
We define the energy functional $E[v](s)$, the boundary term $G(s)$, and the remainder term $R(s)$ as follows:
\begin{equation}
\label{eq:2.25}
    \begin{aligned}
        E[v](s) &:=
        \frac{1}{2}\int_{|y|\le s^\alpha}\frac{|\nabla v|^2}{v^{2(2-q_f)}}\rho\, dy
        - \int_{|y|\le s^\alpha}\psi_{f}(v)\rho\, dy,
        \\
        G(s) &:=
        \int_{|y| = s^\alpha}
        \left\{ \frac{\partial_s v}{v^{2(2-q_f)}}\frac{\partial v}{\partial \nu}
        + \frac{\alpha}{s} \left(\frac{|\nabla v|^2}{2v^{2(2-q_f)}}-\psi_f(v)\right)(y\cdot \nu)\right\}
        \rho\, d\sigma,
        \\
        R(s) &:= \frac12
        \int_{|y|\le s^\alpha}\left(\frac{|\nabla v|}{v}\right)^4 v^{2(q_f-1)} \bigl| f'(u)F(u)-q_f \bigr|^2
        \rho\, dy, 
    \end{aligned}
\end{equation}
for $s>s_0$, where $\nu$ is the outward unit normal to $\{|y|=s^\alpha\}$ and
\begin{equation}\label{eq:2.26}
    \rho(y):=\exp\left(-\frac{|y|^2}{4}\right),
    \qquad 
    \psi_f(v):=\begin{cases}
        \displaystyle\frac{v^{2q_f-1}}{2q_f-1}-\frac{v^{2q_f-2}}{2q_f-2} & \mbox{if } q_f>1,
        \\
        v-\log v & \mbox{if } q_f=1. 
    \end{cases}
\end{equation}
With this notation, the energy inequality takes the following form.

\begin{lemma}
    \label{Lemma:2.3}
    Let $v$ be defined by \eqref{eq:2.24}.
    Define $E[v](s)$, $G(s)$, and $R(s)$ by \eqref{eq:2.25}.
    Then
    \begin{equation}
        \label{eq:2.27}
        \begin{aligned}
            \frac{1}{2}\int_{|y|\le s^\alpha}\frac{(\partial_s v)^2}{v^{2(2-q_f)}}\rho\, dy
            & \le
            - \frac{d}{ds}E[v](s) + G(s)
            + R(s) 
        \end{aligned}
    \end{equation}
    for all $s>s_0$, where $u=u(a+y\sqrt{T-t},t)$ 
    {\color{black}in the definition of $R(s)$.}
\end{lemma}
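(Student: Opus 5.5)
Multiply \eqref{eq:1.6} by the weight $\partial_s v\,v^{-2(2-q_f)}\rho$, integrate over the moving ball $\{|y|\le s^\alpha\}$, and recognise the result as $-\frac{d}{ds}E[v](s)$ plus the boundary contribution $G(s)$ plus an error term. That error term is then absorbed with Young's inequality into half of the dissipation integral, leaving exactly $R(s)$.

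\textbf{Step 1 (divergence form).} The key observation is that the principal part of \eqref{eq:1.6} is in divergence form with respect to this weight. With $m:=2(2-q_f)$, a direct computation gives
\[
\frac{1}{v^{m}\rho}\nabla\cdot\Bigl(\frac{\rho\nabla v}{v^{m}}\Bigr)\,v^{m}
=\Delta v-\tfrac12 y\cdot\nabla v-m\frac{|\nabla v|^2}{v}.
\]
Consequently, after multiplying \eqref{eq:1.6} by $v^{-m}$,
\[
\frac{\partial_s v}{v^m}=\frac1\rho\nabla\cdot\Bigl(\frac{\rho\nabla v}{v^m}\Bigr)+\frac{m}{2}\frac{|\nabla v|^2}{v^{m+1}}+\frac{v^2-v}{v^m}+\frac{|\nabla v|^2}{v^{m+1}}\bigl(f'(u)F(u)-q_f\bigr).
\]
One must check that the coefficient $-(2-q_f)$ in \eqref{eq:1.6} matches $-m+m/2$; indeed $-m+\frac m2=-(2-q_f)$. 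I also note that $\psi_f'(v)=v^{2q_f-2}-v^{2q_f-3}=(v^2-v)v^{-m}$ in both cases $q_f>1$ and $q_f=1$ (for $q_f=1$, $\psi_f'=1-1/v$).

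\textbf{Step 2 (multiply and integrate).} Multiply by $\partial_s v\,\rho$ and integrate over $|y|\le s^\alpha$. Integrating the divergence term by parts gives
\[
-\int\frac{\nabla v\cdot\nabla\partial_s v}{v^m}\rho+\int_{|y|=s^\alpha}\frac{\partial_s v}{v^m}\partial_\nu v\,\rho .
\]
The volume parts then combine exactly:
\[
-\int\frac{\nabla v\cdot\nabla\partial_s v}{v^m}\rho+\frac m2\int\frac{|\nabla v|^2\partial_s v}{v^{m+1}}\rho=-\int\partial_s\Bigl(\frac{|\nabla v|^2}{2v^m}\Bigr)\rho ,
\]
and $\int\psi_f'(v)\partial_s v\,\rho=\int\partial_s\psi_f(v)\rho$. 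Differentiating $E[v](s)$ over the moving domain, Reynolds' formula produces the boundary term $\frac{\alpha}{s}s^{\alpha}$ times the integrand on $|y|=s^\alpha$. Since $y\cdot\nu=s^\alpha$ there, this equals $\frac{\alpha}{s}\int_{|y|=s^\alpha}\bigl(\frac{|\nabla v|^2}{2v^m}-\psi_f\bigr)(y\cdot\nu)\rho\,d\sigma$. Collecting terms,
\[
\int\frac{(\partial_s v)^2}{v^m}\rho=-\frac{d}{ds}E[v](s)+G(s)+\int\frac{|\nabla v|^2\partial_s v}{v^{m+1}}\bigl(f'(u)F(u)-q_f\bigr)\rho .
\]
The sign conventions in $G$ need to be tracked carefully; this bookkeeping is the main place to err.

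\textbf{Step 3 (Young).} Bound the last integral by
\[
\frac12\int\frac{(\partial_s v)^2}{v^m}\rho+\frac12\int\frac{|\nabla v|^4}{v^{m+2}}\bigl|f'(u)F(u)-q_f\bigr|^2\rho .
\]
Since $v^{-(m+2)}=v^{-4}v^{2(q_f-1)}$, the second term is exactly $R(s)$. Absorbing the first term into the left-hand side gives \eqref{eq:2.27}.

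\textbf{Justification.} The manipulations require $v>0$ and the smoothness of $v$ up to the sphere $|y|=s^\alpha$. Positivity holds by the strong maximum principle, and smoothness holds because $\overline{B_{s^\alpha e^{-s/2}}(a)}\subset\Omega$ for $s\ge s_0$ and $u$ is classical there.
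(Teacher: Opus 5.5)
Your proposal is correct and follows the paper's proof essentially step for step. You multiply \eqref{eq:1.6} by $\partial_s v\,v^{-2(2-q_f)}\rho$, integrate by parts over the moving ball, use the transport formula with normal velocity $(\alpha/s)\,y\cdot\nu$ to produce $-\frac{d}{ds}E[v]+G$, and absorb the perturbation term by Young's inequality to leave exactly $R(s)$. The only slip is cosmetic: in the first display of Step~1 the prefactor should be $v^{m}/\rho$ (as written, the $v^{m}$ factors cancel and the left side is $v^{-m}$ times the right side), but your second display, the one actually used, is correct.
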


\begin{proof}
    Since the normal velocity of $\{|y|=s^\alpha\}$ is $\alpha s^{\alpha-1}=(\alpha/s)y\cdot\nu$, we have
    \begin{equation}
    \label{eq:2.28}
        \frac{d}{ds}\int_{|y|\le s^\alpha} g(y,s)\,dy
        =\int_{|y|\le s^\alpha} \partial_s g(y,s)\,dy
        +\frac{\alpha}{s}\int_{|y| = s^\alpha} g(y,s)\,(y\cdot\nu)\,d\sigma
    \end{equation}
    for every smooth function $g$ on the moving domain.
    Note also that
    \[
        \nabla\rho=-\frac12y\rho,
        \qquad
        \psi_f'(v)=v^{2q_f-3}(v-1)=\frac{-v+v^2}{v^{2(2-q_f)}}
    \]
    for both $q_f=1$ and $q_f>1$.
    Using these identities, we multiply \eqref{eq:1.6} by $(\partial_s v)v^{-2(2-q_f)}\rho$ and integrate by parts to obtain
    \[
    \begin{aligned}
        \int_{|y|\le s^\alpha}\frac{(\partial_s v)^2}{v^{2(2-q_f)}}\rho\,dy
        & =-\int_{|y|\le s^\alpha}\frac{\nabla v\cdot\nabla(\partial_s v)}{v^{2(2-q_f)}}\rho\,dy
        +(2-q_f)\int_{|y|\le s^\alpha}\frac{|\nabla v|^2\partial_s v}{v^{2(2-q_f)+1}}\rho\,dy
        \\
        &\qquad +\int_{|y|\le s^\alpha}\psi_f'(v)\partial_s v\,\rho\,dy
        +\int_{|y|=s^\alpha}\frac{\partial_s v}{v^{2(2-q_f)}}\frac{\partial v}{\partial\nu}\rho\,d\sigma
        \\
        &\qquad +\int_{|y|\le s^\alpha}\frac{|\nabla v|^2\partial_s v}{v^{2(2-q_f)+1}}\bigl(f'(u)F(u)-q_f\bigr)\rho\,dy
        \\
        & =-\int_{|y|\le s^\alpha}\partial_s\left(\frac{|\nabla v|^2}{2v^{2(2-q_f)}}-\psi_f(v)\right)\rho\,dy
        +\int_{|y|=s^\alpha}\frac{\partial_s v}{v^{2(2-q_f)}}\frac{\partial v}{\partial\nu}\rho\,d\sigma
        \\
        &\qquad 
        +\int_{|y|\le s^\alpha}\frac{|\nabla v|^2\partial_s v}{v^{2(2-q_f)+1}}\bigl(f'(u)F(u)-q_f\bigr)\rho\,dy.
    \end{aligned}
    \]
    Applying \eqref{eq:2.28} to the first integral and using \eqref{eq:2.25}, we obtain
    \[
    % \begin{aligned}
        \int_{|y|\le s^\alpha}\frac{(\partial_s v)^2}{v^{2(2-q_f)}}\rho\,dy
        =-\frac{d}{ds}E[v](s)+G(s)
        +\int_{|y|\le s^\alpha}\frac{|\nabla v|^2\partial_s v}{v^{2(2-q_f)+1}}\bigl(f'(u)F(u)-q_f\bigr)\rho\,dy.
    % \end{aligned}
    \]
    Finally, by Young's inequality and the definition of $R(s)$,
    \[
    \begin{aligned}
        &\int_{|y|\le s^\alpha}\frac{|\nabla v|^2\partial_s v}{v^{2(2-q_f)+1}}\bigl(f'(u)F(u)-q_f\bigr)\rho\,dy
        \\
        &\quad\le\frac12\int_{|y|\le s^\alpha}\frac{(\partial_s v)^2}{v^{2(2-q_f)}}\rho\,dy
        +\frac12\int_{|y|\le s^\alpha}\left(\frac{|\nabla v|}{v}\right)^4v^{2(q_f-1)}\bigl|f'(u)F(u)-q_f\bigr|^2\rho\,dy
        \\
        &\quad=\frac12\int_{|y|\le s^\alpha}\frac{(\partial_s v)^2}{v^{2(2-q_f)}}\rho\,dy+R(s).
    \end{aligned}
    \]
    {\color{black}Substituting this bound into the preceding identity and absorbing the first term on the right-hand side yields \eqref{eq:2.27}.}
\end{proof}

\subsection{Pohozaev identity}

To identify the stationary limits arising from the energy estimate, we next state a Pohozaev identity obtained by combining \cite{GigaKohn1985}*{Proposition~2} and \cite{Liu}*{Lemma~3.1}.

\begin{lemma}
    \label{Lemma:2.4}
    Let $q \ge 1$. Suppose $v \in C^2(\mathbb{R}^N)$ is a bounded positive solution of
    \[
        \Delta v - \frac12\,y\cdot\nabla v - (2-q)\frac{|\nabla v|^2}{v} - v + v^2 = 0
        \quad\mbox{in }\mathbb{R}^N 
    \]
    and, when $q = 1$, assume in addition that $|\nabla v|/v$ is bounded. Then
    \[
        \left(\frac{N(q-1)}{2q-1} + \frac{2-N}{2}\right)\int_{\mathbb{R}^N}\frac{|\nabla v|^2}{v^{2(2-q)}}\rho\,dy
        + 
        \frac12 \left(\frac12 - \frac{q-1}{2q-1}\right)\int_{\mathbb{R}^N}\frac{|\nabla v|^2}{v^{2(2-q)}}|y|^2\rho\,dy
        =0
    \]
    holds.
\end{lemma}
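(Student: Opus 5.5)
The plan is to derive the identity as a Pohozaev-type relation from two (possibly three) multiplier identities for the variational structure already used in Lemma~\ref{Lemma:2.3}. The stationary equation is the Euler--Lagrange equation of
\[
\mathcal E[v]=\frac12\int_{\mathbb R^N}\frac{|\nabla v|^2}{v^{2(2-q)}}\rho\,dy-\int_{\mathbb R^N}\psi(v)\rho\,dy ,
\]
with $\psi=\psi_f$ from \eqref{eq:2.26} (with $q_f$ replaced by $q$). Indeed, for a test direction $h$ one computes
\[
\int_{\mathbb R^N} h\,\frac{\rho}{v^{2(2-q)}}\Bigl(\Delta v-\tfrac12 y\cdot\nabla v-(2-q)\tfrac{|\nabla v|^2}{v}-v+v^2\Bigr)dy
=-\frac{d}{d\varepsilon}\mathcal E[v+\varepsilon h]\Big|_{\varepsilon=0}.
\]
This is exactly the computation in the proof of Lemma~\ref{Lemma:2.3}, with $\partial_s v$ replaced by $h$ and without the error term.

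I will test with two multipliers, each integrated over $B_R$ and then letting $R\to\infty$.

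First, the \emph{amplitude} multiplier $h=v$, i.e.\ multiply by $v^{1-2(2-q)}\rho$. This gives
\[
(q-1)\int_{\mathbb R^N}\frac{|\nabla v|^2}{v^{2(2-q)}}\rho\,dy=\int_{\mathbb R^N}\bigl(v^{2q-1}-v^{2q-2}\bigr)\rho\,dy ,
\]
and I will also keep its $|y|^2$-weighted analogue, obtained from $h=v|y|^2$ if it is needed.

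Second, the \emph{dilation} multiplier $h=y\cdot\nabla v$. Using $\nabla\rho=-\frac12y\rho$ and $\operatorname{div}(y\rho)=(N-\frac{|y|^2}{2})\rho$, this gives an identity of the form
\[
\frac{N-2}{2}\int\frac{|\nabla v|^2}{v^{2(2-q)}}\rho-\frac14\int\frac{|\nabla v|^2}{v^{2(2-q)}}|y|^2\rho
=\int\psi(v)\Bigl(N-\frac{|y|^2}{2}\Bigr)\rho+\text{(terms from }\tfrac12 y\cdot\nabla v\text{ and }(2-q)|\nabla v|^2/v).
\]
The $\psi$-integrals must then be eliminated. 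To do this I rewrite $\int\psi(v)|y|^2\rho$ through $\int y\cdot\nabla(\Psi(v))\rho$-type integrations by parts, and combine with the amplitude identity. The two homogeneous pieces $v^{2q-1}/(2q-1)$ and $v^{2q-2}/(2q-2)$ (respectively $v$ and $\log v$ when $q=1$) must be separated. For this I expect to use the amplitude identity together with the identity obtained from the multiplier $h=v^{2(2-q)}$, i.e.\ integrating the equation itself against $\rho$. In that identity the gradient terms collapse to $(q-1)\int|\nabla v|^2 v^{-1}\cdots$, and the potential terms are $\int(v^2-v)\rho$-type.

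The coefficients $\frac{N(q-1)}{2q-1}$ and $\frac{q-1}{2q-1}$ signal exactly this elimination of $v^{2q-1}$ against the gradient energy, using the factor $2q-1$ coming from $\psi$. I will carry out the linear algebra with these three identities, following \cite{GigaKohn1985}*{Proposition~2} for $q>1$ and \cite{Liu}*{Lemma~3.1} for $q=1$.

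The main obstacle is justifying the integrations by parts on all of $\mathbb R^N$, i.e.\ showing that the boundary integrals on $\partial B_R$ vanish as $R\to\infty$ and that every integrand is integrable. The weight $v^{-2(2-q)}$ is singular where $v$ is small whenever $q<2$. For $q>1$ I would first pass to $w=v^{q-1}$, in which all integrands become polynomial in $w$ and $\nabla w$. Boundedness of $v$ and interior elliptic estimates (as in Lemma~\ref{Lemma:2.2}) give bounded $\nabla w$, so every term has at most polynomial growth in $|y|$ and the Gaussian $\rho$ kills the boundary terms. For $q=1$, the extra hypothesis that $|\nabla v|/v$ is bounded plays the same role for $\log v$ and for $|\nabla v|^2/v^2$. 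In particular, $|\log v(y)|\le C(1+|y|)$, which suffices against $\rho$.
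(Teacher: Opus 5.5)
\textbf{Gap: the elimination step does not close.} You propose to use the multiplier $h=v^{2(2-q)}$, i.e.\ integrating the equation against $\rho$. This gives $(2-q)\int|\nabla v|^2v^{-1}\rho\,dy=\int(v^2-v)\rho\,dy$; note the coefficient is $2-q$, not $q-1$.
\begin{itemize}
\item For $q\neq 3/2$ this identity contains at least one integral that appears in none of your other identities.
\item For $q=3/2$ it coincides with the amplitude identity.
\end{itemize}
Either way it cannot separate $\int v^{2q-1}\rho$ from $\int v^{2q-2}\rho$. Nor does anything need separating.

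Write $A$, $B$ for the two integrals in the statement, and set $P_k=\int v^{2q-k}\rho\,dy$, $Q_k=\int|y|^2v^{2q-k}\rho\,dy$ for $k=1,2$. For $q>1$, the multipliers $h=v$, $h=|y|^2v$ (which you listed only as optional) and $h=y\cdot\nabla v$ give
\begin{gather*}
(q-1)A=P_1-P_2,\qquad
(q-1)B=Q_1-Q_2+\frac{N}{q-1}P_2-\frac{Q_2}{2(q-1)},\\
\frac{N-2}{2}A-\frac14B=\int_{\mathbb{R}^N}\psi(v)\Bigl(N-\frac{|y|^2}{2}\Bigr)\rho\,dy.
\end{gather*}
The last identity holds exactly, with no extra terms. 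The cubic gradient terms produced by the weight $v^{-2(2-q)}$ cancel the $(2-q)|\nabla v|^2/v$ contribution, and the drift is absorbed into $\rho$. Substituting the first two identities into the third, the coefficients of $P_2$ and $Q_2$ vanish identically, and exactly the claimed relation between $A$ and $B$ remains. This is Giga and Kohn's multiplier set $w$, $|y|^2w$, $y\cdot\nabla w$, written in the variable $v$.

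For $q=1$ the same three multipliers work:
\begin{itemize}
\item $h=v$ gives $\int v\rho=\int\rho$;
\item $h=|y|^2v$ expresses the $\log v$ terms of $\int\psi(v)(N-|y|^2/2)\rho$ through $\int|y|^2(v-1)\rho$;
\item $\int|y|^2\rho=2N\int\rho$ then makes the right-hand side vanish.
\end{itemize}

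\textbf{Comparison with the paper.} The paper does none of this algebra. For $q>1$ it notes that $\tilde v=((q-1)v)^{q-1}$ solves $\Delta\tilde v-\frac12y\cdot\nabla\tilde v-(q-1)\tilde v+\tilde v^{p}=0$ with $p=q/(q-1)$. It then applies Giga and Kohn's Proposition~2, which also supplies the justification of the integrations by parts for bounded solutions. It translates back using $|\nabla\tilde v|^2=c\,|\nabla v|^2/v^{2(2-q)}$ and $1/(p+1)=(q-1)/(2q-1)$. For $q=1$ it quotes Liu's Lemma~3.1, the corresponding statement for $\log v$.

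Your $w=v^{q-1}$ is the same substitution up to a constant factor, so you could import both the identity and its justification. Your route, once corrected, has the merit of treating $q=1$ and $q>1$ by a single variational computation.

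\textbf{Minor point on justification.} Local estimates as in Lemma~\ref{Lemma:2.2} do not give a uniform bound on $\nabla w$, because the drift $\frac12 y\cdot\nabla w$ is unbounded. Rescaling to balls of radius $(1+|y|)^{-1}$ gives $|\nabla w(y)|\le C(1+|y|)$, which is all the Gaussian weight requires.
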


\begin{proof}
    For $q=1$, Lemma~\ref{Lemma:2.4} follows from \cite{Liu}*{Lemma~3.1}.
    {\color{black}For $q>1$, set $\tilde{v}(y):=\bigl((q-1)v(y)\bigr)^{q-1}$.} A direct computation shows that $\tilde{v}$ satisfies
    \[
        \Delta \tilde{v} - \frac{1}{2}y\cdot\nabla \tilde{v} - (q-1)\tilde{v} + \tilde{v}^{\frac{q}{q-1}} = 0
        \quad\mbox{in }\mathbb{R}^N.
    \]
    Setting $p := \frac{q}{q-1} > 1$ and applying \cite{GigaKohn1985}*{Proposition~2}, we obtain
    \[
        \left(\frac{N}{p+1} + \frac{2-N}{2}\right) \int_{\mathbb{R}^N} |\nabla \tilde{v}|^2 \rho\,dy
        + \frac12 \left(\frac12 - \frac{1}{p+1}\right) \int_{\mathbb{R}^N} |y|^2 |\nabla \tilde{v}|^2 \rho\,dy
        = 0. 
    \]
    The identity $\tilde{v}(y) = \bigl(v(y)/(p-1)\bigr)^{\frac{1}{p-1}}$ then gives the conclusion.
\end{proof}

\begin{remark}
    \label{Remark:2.1}
    When $q = 1$ and $N \le 2$, or when $q > 1$ and $q \ge \frac{N+2}{4}$, we have
    \[
        \frac{N(q-1)}{2q-1} + \frac{2-N}{2} \ge 0.
    \]
    Hence, in either case, $|\nabla v| \equiv 0$ in $\mathbb{R}^N$.
\end{remark}

\section{Lower bounds at blow-up points}

In this section, we establish 
the following lower bound for nonnegative type~I
blow-up solutions at interior blow-up points, valid for every finite
$q_f\in[1,\infty)$.

\begin{theorem}\label{Theorem:3.1}
Let $f$ and $u$ satisfy the assumptions of Proposition~$\mathrm{\ref{Proposition:2.1}}$, with any finite
$q_f\in[1,\infty)$. If $a\in\Omega$ is an interior blow-up point, then
\begin{equation}
 \liminf_{t\uparrow T}\frac{T-t}{F(u(a,t))}>0.
 \label{eq:3.1}
\end{equation}
\end{theorem}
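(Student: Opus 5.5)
We argue by contradiction, following the nondegeneracy strategy of Giga--Kohn: if $v_a(0,s)=(T-t)/F(u(a,t))$ is small at some late time, then $u$ stays bounded near $a$, so $a$ is not a blow-up point. The key tool is an $\varepsilon$-regularity statement. There are $\varepsilon>0$ and $r_0>0$, depending only on $N$, $f$ and the type~I constant $M$, such that the following holds. If for some $t_0$ close to $T$ we have
\[
 \frac{T-t}{F(u(x,t))}\le\varepsilon\quad\text{for }|x-a|\le r_0\sqrt{T-t_0},\ t_0\le t<T,
\]
then $u$ is bounded in a neighbourhood of $(a,T)$. This contradicts the assumption that $a$ is a blow-up point.

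\emph{Step 1: from the centre to a parabolic neighbourhood.} Suppose \eqref{eq:3.1} fails, so there are $t_j\uparrow T$ with $v_a(0,s_j)\to0$. Apply \eqref{eq:2.3} of Lemma~\ref{Lemma:2.1} with $x=a$, $t=t_j$. It gives
\[
 F(u(\xi,\tau))\ge e^{-C}F(u(a,t_j))
\]
for $|\xi-a|\le2\sqrt{T-t_j}$ and $\tau\in[T-2(T-t_j),\,T-(T-t_j)/4]$. Since $T-\tau\le2(T-t_j)$ on this window,
\[
 \frac{T-\tau}{F(u(\xi,\tau))}\le 2e^{C}\,v_a(0,s_j)\to0 .
\]
So smallness at the single point $(a,t_j)$ spreads to the whole parabolic cylinder of size $\sqrt{T-t_j}$ around it.

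\emph{Step 2: propagating smallness up to time $T$.} This is the main obstacle. We must show that once $(T-t)/F(u)$ is small on a cylinder, it remains small (in fact decays) for later times on a slightly shrunk ball. We work with $w=1/F(u)$, which by \eqref{eq:2.19} satisfies
\[
 w_t-\Delta w=w^2+(f'(u)F(u)-2)\frac{|\nabla w|^2}{w},
\]
and use the transformation $\Phi$ of Lemma~\ref{Lemma:2.2} to remove the gradient term. Since $f'(u)F(u)\to q_f$, this amounts to working with $\tilde w\approx w^{q_f-1}$ when $q_f>1$, which solves $\tilde w_t-\Delta\tilde w\approx c\,\tilde w^{p}$ with $p=q_f/(q_f-1)$, or with $\log w$ when $q_f=1$. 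On this equation I would run a local comparison, in the spirit of \cite{GigaKohn1989} and \cite{Chabi}.

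Take a cutoff $\chi$ supported in $|x-a|\le r_0\sqrt{T-t_j}$. The type~I bound gives $w\le M/(T-t)$, and the quantity $\varepsilon$ multiplying the reaction term is small. Together these make the reaction term a perturbation of the heat equation. I expect a supersolution of the form
\[
 \bar w(t)=\frac{\varepsilon'}{T-t}\Bigl(\frac{T-t}{T-t_j}\Bigr)^{\gamma},\qquad \gamma>0 \text{ small},
\]
corrected by a boundary layer to handle the lateral boundary, where only the type~I bound is available. This should yield
\[
 (T-t)w\le \varepsilon'\,\bigl((T-t)/(T-t_j)\bigr)^{\gamma}
\]
near $a$, and the gradient estimates of Proposition~\ref{Proposition:2.1} control the error terms. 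Iterating over dyadic time scales $T-t=2^{-k}(T-t_j)$, and reapplying Step~1 at each scale, improves this to
\[
 w\le C (T-t)^{-1+\gamma}.
\]

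\emph{Step 3: conclusion.} From $w\le C(T-t)^{-1+\gamma}$ near $a$ we get a decay $f(u)\lesssim\ldots$ that is integrable in time; alternatively, one more application of the local smoothing of the heat semigroup bootstraps to boundedness of $u$. Concretely, we use the Duhamel formula on a small ball with the bound $f(u)\le C(T-t)^{-1+\gamma}$, which is integrable, to conclude that $u$ is bounded near $(a,T)$. This contradicts $a$ being a blow-up point.

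The delicate part is Step~2. The constants must depend only on $M$ and on the uniform bounds in Proposition~\ref{Proposition:2.1}, so that the comparison closes without radial symmetry. Where $u$ is only moderately large, $f'F$ may be far from $q_f$; I would handle this exactly as in Lemma~\ref{Lemma:2.1}, using the modified $\widehat h_f$.
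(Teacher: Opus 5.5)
\emph{There is a genuine gap: Step~2 is the whole content of the theorem, and your proposal gives an expectation there rather than an argument.} Your Step~1 is correct. Step~2, which you yourself identify as the main obstacle, does not close as sketched.

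\textbf{Why the sketched comparison fails.} Outside the cylinder where Step~1 applies you only know $v\le M$, and necessarily $M\ge 1$ (since $F(\|u(t)\|_\infty)\le T-t$). There the potential $w$ in $w^2=w\cdot w$ is only bounded by $M/(T-t)$. That produces growth at least at the ODE rate, so the reaction term is not a perturbation of the heat equation in that region. In similarity variables, the Ornstein--Uhlenbeck flow carries data from $|y|>2$ to the centre with weight of order one within similarity time of order one. So smallness on the cylinder of radius $2\sqrt{T-t_j}$ cannot propagate unless you control $v$ on much larger regions at the same later times. A spatially constant $\bar w$ with an unspecified ``boundary layer'' provides no such control. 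Reapplying Step~1 at every dyadic scale does not rescue this: each application costs the fixed factor $2e^{C}$, while your supersolution gains only $2^{-\gamma}$ per scale.

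\textbf{Uniformity in the centre is also missing.} Even propagating smallness at $x=a$ would not suffice. Your $\varepsilon$-regularity hypothesis needs smallness for all later times on a \emph{fixed} ball, i.e.\ at points with $|x-a|\gg\sqrt{T-t}$. That requires decay of the rescaled functions $v_b$ for every centre $b$ near $a$, with constants uniform in $b$, and your plan never addresses this. (Also, the transform $\Phi$ of Lemma~\ref{Lemma:2.2}, applied to the equation for $w=1/F(u)$, simply returns $u$ up to an affine change. So that preliminary step gains nothing, and the $\widehat h_f$ device does not supply a sign.)

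\textbf{What the paper does instead.} The missing idea is a mechanism that ties $v_b$ on large parabolic balls to its value at the centre at the \emph{same} time, inside a functional where this spreading is harmless. The paper takes $G_\beta=F^{-\beta}$ with $\beta+1>\sup_{\xi\ge d}f'(\xi)F(\xi)$. Convexity then gives $(\partial_t-\Delta)G_\beta(u)\le\beta\,G_\beta(u)^{1+1/\beta}$ for all $u\ge d$, with no need for $u$ to be large. In similarity variables centred at $b\in B_r(a)$, it studies $\Phi_b(s)=\int(\chi v_b^\beta)^2\rho\,dy$, with $\chi$ fixed in $x$. Proposition~\ref{Proposition:2.1} gives $|\nabla_y\log v_b|\le C$, hence $e^{-C|y|}v_b(0,s)\le v_b(y,s)\le e^{C|y|}v_b(0,s)$. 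The Gaussian weight absorbs $e^{C|y|}$, so $\Phi_b\asymp v_b(0,s)^{2\beta}$. The nonlinear term is then at most $C\Phi_b^{1+1/(2\beta)}$, the cutoff errors are at most $Ce^{-s/2}\Phi_b$, and
\[
\Phi_b'\le\bigl(-2\beta+C\Phi_b^{1/(2\beta)}+Ce^{-s/2}\bigr)\Phi_b .
\]
Smallness at one time gives $v_b(0,s)\le C\Phi_b(s_0)^{1/(2\beta)}e^{-(s-s_0)}$, i.e.\ $1/F(u(b,t))$ stays bounded. Continuity of $b\mapsto\Phi_b(s_0)$, together with uniformity of all constants in $b$, then yields boundedness on $B_\delta(a)$.

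\textbf{Step 3.} This route makes your Step~3 unnecessary, and Step~3 as written is also wrong. The bound $w\le C(T-t)^{-1+\gamma}$ does not give $f(u)\le C(T-t)^{-1+\gamma}$: for $f(u)=u^p$ one has $f(u)=(w/(p-1))^{p/(p-1)}$, which need not be integrable in time. What is time-integrable is the potential $\beta w$ in $(\partial_t-\Delta)G_\beta(u)\le\beta w\,G_\beta(u)$.
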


The proof combines the nondegeneracy approach of \cite{GigaKohn1989} with
a transformation of the type used in \cite{FujishimaIoku}.
Related local boundedness and nondegeneracy arguments
for nonlinearities without scale invariance can be found in \cite{Chabi}.
{\color{black}Using the gradient estimate of Proposition~\ref{Proposition:2.1}, we develop
an argument based on a weighted $L^2$ estimate.}

{\color{black}We first introduce the notation and record some basic properties used in
the lemma below.} Let $f$ and $u$ satisfy the
assumptions of Proposition~\ref{Proposition:2.1}. Fix $a\in\Omega$ and choose
$r>0$ so that
\[
 5r<\min\{\operatorname{dist}(a,\partial\Omega),\sqrt T\}.
\]
Fix also $t_0\in(0,T)$. Applying \eqref{eq:2.9} with
$K'=\overline{B_{5r}(a)}$ and $t_1=t_0$, we obtain
\[
 d:=\inf\{u(x,t):\,(x,t)\in\overline{B_{5r}(a)}\times[t_0,T)\}>0.
\]
Thus $u$ has a uniform positive lower bound on this entire time interval.
Condition~\eqref{eq:1.2} and the $C^1$ regularity of $f$ imply that
$f(\xi)>0$ for $\xi>0$. Since $f'F$ is continuous on $(0,\infty)$
and tends to the finite limit $q_f$ at infinity, it is bounded above
on $[d,\infty)$. We may therefore fix a finite exponent $\beta$ such that
\[
 \beta\ge1,\qquad \beta+1>\sup_{\xi\ge d}f'(\xi)F(\xi).
\]
This exponent depends only on the fixed data and will be used for all
centers $b\in B_r(a)$ and all starting times considered below.

The function $G_\beta(\xi):=F(\xi)^{-\beta}$ belongs to
$C^2((0,\infty))$, is strictly increasing, and tends to infinity as
$\xi\to\infty$. Direct differentiation yields
\[
 G_\beta'(\xi)=\frac{\beta}{f(\xi)F(\xi)^{\beta+1}},\qquad
 G_\beta''(\xi)=\frac{\beta(\beta+1-f'(\xi)F(\xi))}
                     {f(\xi)^2F(\xi)^{\beta+2}}.
\]
By the choice of $\beta$, we have $G_\beta''(\xi)>0$ for
$\xi\ge d$. In particular, $G_\beta$ is convex on $[d,\infty)$,
which contains all values of $u$ on
$B_{5r}(a)\times[t_0,T)$.

To localize the \textcolor{black}{quasi-similarity transformation} in \eqref{eq:1.4} near $a$,
we apply a spatial cutoff to the $\beta$th power of the rescaled function.
The strong maximum principle gives $u>0$ in $\Omega\times(0,T)$,
so $G_\beta(u(x,t))$ is well defined there.
For each $b\in B_r(a)$, consider the change of variables
\[
 (x,t)=(b+e^{-\frac{s}{2}}y,T-e^{-s}).
\]
For $s>-\log T$ and $x\in\Omega$, define
\[
 v_b(y,s)=\frac{e^{-s}}{F(u(x,t))},\qquad V_b(y,s)=v_b(y,s)^\beta.
\]
Next, fix $\chi\in C_c^\infty(B_{4r}(a))$, with $0\le\chi\le1$ and
$\chi=1$ on $B_{3r}(a)$.
For $(y,s)\in\mathbb R^N\times(-\log T,\infty)$, define
\[
 W_b(y,s)=e^{-\beta s}\chi(x)G_\beta(u(x,t)),
\]
where the product on the right-hand side is extended by zero
outside $\Omega$ in the original variables. The same convention
will be used for products involving derivatives of $\chi$ and for
all whole-space integrals below.
When $x=b+e^{-s/2}y\in\Omega$, this definition gives
\[
 W_b(y,s)=\chi(x)V_b(y,s).
\]
Only the localized products are extended by zero; $v_b$ and $V_b$
are defined only when $x\in\Omega$.
By the type~I bound, there exists a constant $M\ge1$ such that
\[
 \frac{T-t}{F(u(x,t))}\le M
\]
for all $x\in\Omega$ and all 
$t$ sufficiently close to $T$.
It follows immediately that
\begin{equation}
 0\le W_b\le M^\beta
 \label{eq:3.2}
\end{equation}
on $\mathbb R^N$ for all sufficiently large $s$, uniformly in $b\in B_r(a)$.
We take the common lower bound for $s$ sufficiently large that $s\ge0$,
$t>t_0$, the type~I and local gradient bounds hold, and $e^{-s/2}\le2r$.
The last condition ensures that $\chi=1$ on $b+e^{-s/2}B_1$ for every
$b\in B_r(a)$.
Constants may depend on the fixed data, including
$N,u,f,T,a,r,t_0,d,\beta,\chi$ and $M$, but are independent of
$b\in B_r(a)$, the starting time $s_0$, and the current time $s$.

The following lemma shows that smallness of the weighted norm at 
a time sufficiently close to $T$
implies decay at later times.

\begin{lemma}
\label{Lemma:3.1}
With the notation above and the Gaussian weight
$\rho(y)=e^{-|y|^2/4}$ defined in
Section~$\mathrm{\ref{Subsection:2.2}}$, set
\[
 \Phi_b(s)=\int_{\mathbb{R}^N}W_b(y,s)^2\rho(y)\,dy.
\]
There exist $\varepsilon_*>0$, $s_*\ge0$ and $C>0$, independent of
$b\in B_r(a)$, $s_0$ and $s$, such that the conditions
$s_0\ge s_*$ and $\Phi_b(s_0)<\varepsilon_*$ imply
\begin{equation}
 \Phi_b(s)\le C\Phi_b(s_0)e^{-2\beta(s-s_0)},\qquad
 v_b(0,s)\le C\Phi_b(s_0)^{\frac{1}{2\beta}}e^{-(s-s_0)}, 
 \label{eq:3.3}
\end{equation}
for all $s\ge s_0$.
\end{lemma}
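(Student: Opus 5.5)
The plan is the Giga--Kohn nondegeneracy argument, applied to the localized power $W_b=\chi V_b$, which satisfies a differential inequality with the power nonlinearity $p=1+1/\beta$.

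\textbf{Step 1: differential inequality for $W_b$.} For $G_\beta(u)$ in the original variables I compute
\[
\partial_tG_\beta-\Delta G_\beta=G_\beta'(u)f(u)-G_\beta''(u)|\nabla u|^2\le \beta F(u)^{-\beta-1}=\beta G_\beta^{1+1/\beta}.
\]
This uses the convexity of $G_\beta$ on $[d,\infty)$, which holds by the choice of $\beta$. Passing to $(y,s)$ with $V_b=e^{-\beta s}G_\beta(u)$ gives
\[
\partial_sV_b\le\Delta V_b-\tfrac12y\cdot\nabla V_b-\beta V_b+\beta V_b^{1+1/\beta}.
\]
Multiplying by $\chi$ produces the commutator terms $-2e^{-s/2}\nabla\chi\cdot\nabla_yV_b-e^{-s}(\Delta\chi)V_b$. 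These are supported in $|y|\ge 2re^{s/2}$, where $\rho\le e^{-r^2e^s}$. By Proposition~\ref{Proposition:2.1}, $|\nabla_yv_b|\le Cv_b$, so $|\nabla_y V_b|\le C\beta V_b\le CM^\beta$. Hence these terms contribute at most $Ce^{-ce^{s}}$ to every weighted integral, uniformly in $b$.

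\textbf{Step 2: energy estimate for $\Phi_b$.} Multiplying by $2W_b\rho$ and integrating (the operator $\Delta-\frac12y\cdot\nabla$ is symmetric in $L^2(\rho)$) gives
\[
\Phi_b'\le-2\int|\nabla W_b|^2\rho-2\beta\Phi_b+2\beta\int W_b^{2+1/\beta}\rho+Ce^{-ce^s}.
\]
I split the nonlinear integral at $|y|=R$.

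\emph{Inner part.} The bound $|\nabla_y\log v_b|\le C$ shows that $W_b$ is comparable to a constant on unit balls. Hence
\[
\sup_{|y|\le R}W_b^2\le Ce^{(R+1)^2/4}\Phi_b,
\]
and the inner part is at most $C(e^{cR^2}\Phi_b)^{1/(2\beta)}\Phi_b$.

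\emph{Outer part.} I use $W_b^{1/\beta}\le M$ together with the Gaussian Hardy inequality
\[
\int|y|^2w^2\rho\le16\int|\nabla w|^2\rho+4N\int w^2\rho .
\]
This bounds the outer part by $CMR^{-2}\bigl(\int|\nabla W_b|^2\rho+\Phi_b\bigr)$. Choosing $R$ large and then $\varepsilon_*$ small, a continuity argument gives
\[
\Phi_b(s)\le \Phi_b(s_0)e^{-(2\beta-\delta)(s-s_0)}\quad\text{for all } s\ge s_0,
\]
with $\Phi_b$ staying below $\varepsilon_*$, for a small fixed $\delta>0$. The same unit-ball comparison at $y=0$ gives the pointwise bound $V_b(0,s)^2\le C\Phi_b(s)$. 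Once the sharp rate in $\Phi_b$ is known, this yields the second inequality of \eqref{eq:3.3}, since $v_b=V_b^{1/\beta}$ and $e^{-2\beta(s-s_0)/(2\beta)}=e^{-(s-s_0)}$.

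\textbf{Step 3: upgrading $2\beta-\delta$ to exactly $2\beta$ (main obstacle).} The goal is to show that the nonlinear term is bounded by $\eta(s)\Phi_b$ with $\int_{s_0}^\infty\eta<\infty$. Gronwall then gives $\Phi_b(s)\le C\Phi_b(s_0)e^{-2\beta(s-s_0)}$.

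The inner region is harmless: taking $R^2=\kappa(s-s_0)$ with $\kappa$ small makes $\eta$ exponentially small there. The outer region is the difficulty, because the crude bound $W_b^{1/\beta}\le M$ only yields $\eta\sim R^{-2}\sim (s-s_0)^{-1}$, which is not integrable.

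\begin{itemize}
\item \emph{First attempt.} Obtain pointwise smallness of $v_b$ on the whole region $|y|\lesssim re^{s/2}$ by shifting centers. I would compare $\Phi_{b'}(s_0)$ with $\Phi_b(s_0)$ for $|b'-b|\le Ce^{-s_0/2}$, using the local comparison \eqref{eq:2.3}. This should transfer the $2\beta-\delta$ decay to nearby centers. It would bound $W_b^{1/\beta}\le Ce^{-c(s-s_0)}$ on a growing region, leaving only the $\chi$-tail, which is superexponentially small.
\item \emph{Fallback.} If the comparison between centers is too lossy, I would instead use a Duhamel representation with the Mehler kernel of $\Delta-\frac12y\cdot\nabla-\beta$. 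The kernel gives the exact decay $e^{-\beta(s-s_0)}$ for the linear part, and the nonlinear part would be treated as a perturbation that is already small by Step~2.
\end{itemize}
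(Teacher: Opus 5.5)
\textbf{There is a genuine gap: the proposal never proves the sharp rate $e^{-2\beta(s-s_0)}$, which is the whole content of the lemma.}

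Your Steps 1 and 2 are sound, and the Gaussian Hardy inequality you quote is correct. You also rightly recognize that the rate $2\beta-\delta$ is not enough. It only gives $e^{s}v_b(0,s)\le Ce^{s_0}e^{\delta(s-s_0)/(2\beta)}$, so the boundedness of $u$ in Theorem~\ref{Theorem:3.1} would not follow.

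Step~3, however, is a list of strategies, and the ``first attempt'' does not do what you claim. Smallness of $\Phi_{b'}(s_0)$ for $|b'-b|\le Ce^{-s_0/2}$ controls $v_b(y,s)=v_{b'}(0,s)$, with $b'=b+e^{-s/2}y$, only for $|y|\le Ce^{(s-s_0)/2}$. It does not reach the whole support $|y|\lesssim re^{s/2}$ of $\chi$. So what is left is not just the superexponentially small $\chi$-tail. It is a large intermediate region, where you would again need the Hardy bound, now with $R=Ce^{(s-s_0)/2}$ and with care near $s=s_0$. The Mehler/Duhamel fallback runs into the same problem: the nonlinear perturbation must be \emph{integrably} small relative to $\Phi_b$, not merely small.

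\textbf{The missing idea is that you use your key estimate only on unit balls, when it holds globally.} Proposition~\ref{Proposition:2.1} gives $|\nabla_y\log v_b|\le C$ on the entire expanding region $b+e^{-s/2}y\in B_{4r}(a)$, with $C$ independent of $s$ and $b$. Integrating along the segment from $0$ to $y$ therefore gives
\[
e^{-C|y|}v_b(0,s)\le v_b(y,s)\le e^{C|y|}v_b(0,s)
\]
on all of $\operatorname{supp}\chi$. Since $\int e^{k|y|}\rho\,dy<\infty$, this yields $c\,v_b(0,s)^{2\beta}\le\Phi_b(s)\le C\,v_b(0,s)^{2\beta}$ and
\[
\int_{\mathbb R^N}v_bW_b^2\rho\,dy\le C\,v_b(0,s)^{2\beta+1}\le C\,\Phi_b^{1+\frac1{2\beta}}.
\]
The cutoff terms are likewise bounded by $Ce^{-s/2}\Phi_b$. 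This is a multiplicative bound, so it also removes your additive error $Ce^{-ce^{s}}$, which you would otherwise have to compare with $\Phi_b(s)\ge ce^{-2\beta s}$.

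Hence
\[
\Phi_b'\le\bigl(-2\beta+C\Phi_b^{1/(2\beta)}+Ce^{-s/2}\bigr)\Phi_b,
\]
with no splitting and no Hardy inequality. A continuity argument first gives $\Phi_b(s)\le\Phi_b(s_0)e^{-\beta(s-s_0)}$. The coefficient $C\Phi_b^{1/(2\beta)}+Ce^{-\sigma/2}$ is then integrable, with integral at most $C(\varepsilon_*^{1/(2\beta)}+e^{-s_*/2})$. Integrating $(\log\Phi_b)'$ gives the exact rate $2\beta$ with a constant uniform in $b$, $s_0$ and $s$. The pointwise bound on $v_b(0,s)$ then follows from the lower comparison $\Phi_b\ge c\,v_b(0,s)^{2\beta}$, as in your Step~2.
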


\begin{proof}
We first establish the following inequality on
$B_{5r}(a)\times(t_0,T)$:
\begin{equation}
 \partial_t G_\beta(u)-\Delta G_\beta(u)
 \le\beta G_\beta(u)^{1+\frac{1}{\beta}}.
 \label{eq:3.4}
\end{equation}
Indeed, $u\ge d$ on this set, so a direct computation 
{and the convexity of $G_{\beta}$ give}
\[
 \begin{aligned}
 (\partial_t-\Delta)G_\beta(u)
 &=G_\beta'(u)f(u)-G_\beta''(u)|\nabla u|^2\\
 &=\beta G_\beta(u)^{1+\frac{1}{\beta}}-G_\beta''(u)|\nabla u|^2
 \le\beta G_\beta(u)^{1+\frac{1}{\beta}}.
 \end{aligned}
\]
The support of $\chi$ is contained in this ball, so this local
inequality suffices for the localization below.

We next pass to the localized rescaled function. Set
$\mathcal L=\Delta_y-\tfrac12y\cdot\nabla_y$.
Since $V_b=e^{-\beta s}G_\beta(u(x,t))$, the change of variables gives
\[
 \partial_sV_b=-\beta V_b+e^{-(\beta+1)s}\partial_t[G_\beta(u)]
                  -\frac12y\cdot\nabla_yV_b,\qquad 
 \Delta_yV_b=e^{-(\beta+1)s}\Delta_x[G_\beta(u)].
\]
It follows from \eqref{eq:3.4} that
\[
 \partial_sV_b\le\mathcal LV_b-\beta V_b+\beta v_bV_b
\]
whenever $x=b+e^{-s/2}y\in B_{5r}(a)$ and $t>t_0$.
For the cutoff function, direct differentiation yields
\[
 \begin{gathered}
 \partial_s\bigl[\chi(b+e^{-s/2}y)\bigr]
 =-\frac12e^{-s/2}y\cdot\nabla_x\chi(x),\\
 \mathcal L\bigl[\chi(b+e^{-s/2}y)\bigr]
 =e^{-s}\Delta_x\chi(x)-\frac12e^{-s/2}y\cdot\nabla_x\chi(x).
 \end{gathered}
\]
The time derivative therefore cancels the drift term, and hence
\[
 (\partial_s-\mathcal L)\bigl[\chi(b+e^{-s/2}y)\bigr]
 =-e^{-s}\Delta_x\chi(x).
\]
Using $W_b=\chi V_b$ and the product rule, we obtain
\begin{equation}
 \partial_sW_b\le\mathcal LW_b-\beta W_b+\beta v_bW_b
 -2e^{-s/2}\nabla_x\chi\cdot\nabla_yV_b
             -e^{-s}(\Delta_x\chi)V_b.
 \label{eq:3.5}
\end{equation}
{\color{black}Under the zero-extension convention,}
this inequality holds on
$\mathbb R^N$ for all sufficiently large $s$. Each term involving
$v_b$ or $V_b$ is multiplied by $\chi$ or one of its derivatives,
whose supports lie strictly inside $B_{5r}(a)$.

We next derive a weighted $L^2$ estimate.
On every compact time interval under consideration, the local regularity
of $u$ and the $C^2$ regularity of $G_\beta$ imply that $W_b$
has two continuous spatial derivatives and one continuous time derivative.
Its spatial support lies in a fixed compact subset of $\mathbb R^N$
on that interval. Thus we may differentiate $\Phi_b$ under the integral
sign and integrate by parts without boundary terms.
Since $\mathcal L=\rho^{-1}\operatorname{div}(\rho\nabla)$, multiplying
\eqref{eq:3.5} by $2W_b\rho$ and integrating gives
\begin{equation}
 \begin{aligned}
 \Phi_b'&\le-2\int_{\mathbb{R}^N}|\nabla W_b|^2\rho\,dy
       -2\beta\Phi_b
       +2\beta\int_{\mathbb{R}^N}v_bW_b^2\rho\,dy
       +\mathcal R_b(s),
 \end{aligned}
 \label{eq:3.6}
\end{equation}
where
\[
 \mathcal R_b=-4e^{-s/2}\int_{\mathbb{R}^N}
       W_b\nabla_x\chi\cdot\nabla_yV_b\,\rho\,dy
 -2e^{-s}\int_{\mathbb{R}^N}W_bV_b\Delta_x\chi\,\rho\,dy.
\]
Here the derivatives of $\chi$ are evaluated at $x=b+e^{-s/2}y$.

To estimate the nonlinear term and the localization errors, we use
Proposition~\ref{Proposition:2.1} on the fixed compact set
$\overline{B_{4r}(a)}\Subset\Omega$. It gives
\begin{equation}
    \notag
 \left|\nabla_x\log\frac1{F(u(x,t))}\right|
 =\frac{|\nabla_xu(x,t)|}{f(u(x,t))F(u(x,t))}
 \le\frac{C}{\sqrt{T-t}}
\end{equation}
for all $x\in B_{4r}(a)$ and $t\ge t_0$.
Consequently,
\[
 |\nabla_y\log v_b|
 =e^{-s/2}\left|\nabla_x\log\frac1{F(u)}\right|\le C,
 \qquad
 |\nabla_yV_b|=\beta V_b|\nabla_y\log v_b|\le CV_b, 
\]
throughout the region $b+e^{-s/2}y\in B_{4r}(a)$.
The constants are independent of $b,s$ and $y$, even though this region
in the $y$ variables expands as $s$ increases.
For such a point $x=b+e^{-s/2}y$, the segment from $b$ to $x$ lies
in the ball $B_{4r}(a)$. Integrating the logarithmic gradient along
the corresponding segment from $0$ to $y$ gives
\[
 \left|\log\frac{v_b(y,s)}{v_b(0,s)}\right|
 \le\int_0^1|\nabla_y\log v_b(\theta y,s)|\,|y|\,d\theta
 \le C|y|.
\]
Therefore,
\begin{equation}
 e^{-C|y|}v_b(0,s)\le v_b(y,s)\le e^{C|y|}v_b(0,s)
 \label{eq:3.7}
\end{equation}
whenever $b+e^{-s/2}y\in B_{4r}(a)$.

For each fixed $k>0$, the Gaussian weight satisfies
\[
 \int_{\mathbb R^N}e^{kC|y|}\rho(y)\,dy
 \le e^{2k^2C^2}\int_{\mathbb R^N}e^{-|y|^2/8}\,dy<\infty,
\]
where we used $kC|y|\le |y|^2/8+2k^2C^2$.
Since $0\le\chi\le1$ and its support lies in $B_{4r}(a)$,
\eqref{eq:3.7} gives
\[
 \Phi_b(s)
 =\int_{\mathbb R^N}\chi(x)^2v_b(y,s)^{2\beta}\rho(y)\,dy
 \le v_b(0,s)^{2\beta}
       \int_{\mathbb R^N}e^{2\beta C|y|}\rho(y)\,dy
 \le C v_b(0,s)^{2\beta}.
\]
For the lower bound, recall that $\chi=1$ on $b+e^{-s/2}B_1$
for all sufficiently large $s$, uniformly in $b\in B_r(a)$.
Using the lower bound in \eqref{eq:3.7}, we obtain
\[
 \Phi_b(s)\ge\int_{B_1}v_b(y,s)^{2\beta}\rho(y)\,dy
 \ge v_b(0,s)^{2\beta}\int_{B_1}e^{-2\beta C|y|}\rho(y)\,dy
 \ge c v_b(0,s)^{2\beta}.
\]
Thus there exist fixed constants $c,C>0$ such that
\begin{equation}
 c v_b(0,s)^{2\beta}\le\Phi_b(s)\le C v_b(0,s)^{2\beta}
 \label{eq:3.8}
\end{equation}
for all sufficiently large $s$ and all $b\in B_r(a)$.
In particular, 
by \eqref{eq:3.7} and \eqref{eq:3.8},
the nonlinear integral 
{\color{black}on the right-hand side of \eqref{eq:3.6}}
satisfies
\begin{equation}
 \begin{aligned}
 \int_{\mathbb R^N}v_bW_b^2\rho\,dy
 &=\int_{\mathbb R^N}\chi(x)^2v_b^{2\beta+1}\rho\,dy
 \le v_b(0,s)^{2\beta+1}
       \int_{\mathbb R^N}e^{(2\beta+1)C|y|}\rho(y)\,dy\\
 &\le C v_b(0,s)^{2\beta+1}
 \le C\Phi_b(s)^{1+\frac{1}{2\beta}}.
 \end{aligned}
 \label{eq:3.9}
\end{equation}
{\color{black}Moreover, since 
$|\nabla_yV_b|\le CV_b$ throughout the support of the derivatives
of $\chi$ and $0\le W_b\le V_b$ there, the boundedness of}
$\nabla_x\chi$ and $\Delta_x\chi$ gives
\[
 \begin{aligned}
 |\mathcal R_b|
 &\le C(e^{-s/2}+e^{-s})
       \int_{\{y:\,b+e^{-s/2}y\in B_{4r}(a)\}}V_b(y,s)^2\rho(y)\,dy\\
 &\le C(e^{-s/2}+e^{-s})v_b(0,s)^{2\beta}
       \int_{\mathbb R^N}e^{2\beta C|y|}\rho(y)\,dy
 \le Ce^{-s/2}\Phi_b(s).
 \end{aligned}
\]
Here the last step follows from $s\ge0$ and the lower bound in
\eqref{eq:3.8}. 

Combining this estimate with \eqref{eq:3.6} and
\eqref{eq:3.9}, and dropping the nonpositive gradient term, yields
\begin{equation}
\label{eq:3.10}
 \Phi_b'\le\Bigl(-2\beta+C\Phi_b^{\frac1{2\beta}}
                         +Ce^{-\frac{s}{2}}\Bigr)\Phi_b.
\end{equation}
All constants in this inequality are independent of $b$ and $s$.

To ensure decay when the weighted norm is small, choose
$\varepsilon_*>0$ sufficiently small and $s_*\ge0$ sufficiently large
so that all preceding estimates hold for $s\ge s_*$ and
\[
 C\varepsilon_*^{\frac{1}{2\beta}}\le\frac\beta2,
 \qquad Ce^{-\frac{s_*}2}\le\frac\beta2.
\]
Then $\Phi_b'\le-\beta\Phi_b$ as long as
$\Phi_b\le\varepsilon_*$ and $s\ge s_*$.
Let $s_0\ge s_*$ and $\Phi_b(s_0)<\varepsilon_*$.
If there were a first time $s_1>s_0$ with
$\Phi_b(s_1)=\varepsilon_*$, integration up to that time would give
\[
 \Phi_b(s_1)\le\Phi_b(s_0)e^{-\beta(s_1-s_0)}<\varepsilon_*,
\]
a contradiction. By continuity, no such exit is possible, and hence
\[
 \Phi_b(s)\le\Phi_b(s_0)e^{-\beta(s-s_0)}
\]
for all $s\ge s_0$. 

We now substitute this decay estimate into 
the original differential inequality~\eqref{eq:3.10}
to obtain the full decay rate.
Since $W_b=v_b^\beta>0$ on $B_1$, we have $\Phi_b(s)>0$.
We may therefore divide by $\Phi_b$ to obtain
\[
 \frac{d}{ds}\log\Phi_b
 \le-2\beta+C\Phi_b(s_0)^{\frac{1}{2\beta}}e^{-\frac{s-s_0}2}
            +Ce^{-\frac{s}2}.
\]
Integrating over $[s_0,s]$ gives
\[
 \log\frac{\Phi_b(s)}{\Phi_b(s_0)}
 \le-2\beta(s-s_0)
     +2C\Phi_b(s_0)^{\frac{1}{2\beta}}
           \bigl(1-e^{-\frac{s-s_0}2}\bigr)
 +2C\bigl(e^{-\frac{s_0}2}-e^{-\frac{s}2}\bigr).
\]
Consequently,
\[
 \Phi_b(s)
 \le\Phi_b(s_0)
       \exp\!\left(2C\Phi_b(s_0)^{\frac{1}{2\beta}}+2Ce^{-\frac{s_0}2}\right)
       e^{-2\beta(s-s_0)}
 \le C\Phi_b(s_0)e^{-2\beta(s-s_0)}.
\]
The last constant is uniform in $b,s_0$ and $s$, since
$\Phi_b(s_0)<\varepsilon_*$ and $s_0\ge s_*$.
This proves the first inequality in \eqref{eq:3.3}.
Finally, the lower bound in \eqref{eq:3.8} gives
\[
 v_b(0,s)\le C\Phi_b(s)^{\frac{1}{2\beta}}
 \le C\Phi_b(s_0)^{\frac{1}{2\beta}}e^{-(s-s_0)},
\]
which is the second inequality in \eqref{eq:3.3}.
\end{proof}

We now apply Lemma~\ref{Lemma:3.1} to obtain the lower bound at an interior
blow-up point
stated in Theorem~\ref{Theorem:3.1}.

\begin{proof}[Proof of Theorem~\ref{Theorem:3.1}]
{\color{black}Using the construction and notation from Lemma~\ref{Lemma:3.1},
we argue by contradiction.} Suppose that \eqref{eq:3.1} fails. Then there exist
$t_j\uparrow T$ and $s_j=-\log(T-t_j)\to\infty$ such that
$v_a(0,s_j)\to0$. By \eqref{eq:3.8},
\[
 \Phi_a(s_j)\le C v_a(0,s_j)^{2\beta}\to0.
\]
Choose an index $j_*$ so large that, with $s_0=s_{j_*}$,
we have $s_0\ge s_*$ and $\Phi_a(s_0)<\varepsilon_*/2$.
We keep this starting time fixed for the remainder of the proof.

We next extend this smallness to nearby centers. At the fixed time $s_0$,
the zero extension of $\chi G_\beta(u)$ is continuous and compactly supported,
so $W_b(y,s_0)$ depends continuously on $b$ for each $y$.
Since $W_b(y,s_0)^2\rho(y)\le M^{2\beta}\rho(y)$ by \eqref{eq:3.2}
and $\rho\in L^1(\mathbb R^N)$, the dominated convergence theorem shows
that $b\mapsto\Phi_b(s_0)$ is continuous.
Hence there is a fixed $0<\delta<r$ such that
$\Phi_b(s_0)<\varepsilon_*$ for all $b\in B_\delta(a)$.
Applying Lemma~\ref{Lemma:3.1} with the same $\beta$ and the same constants
to all these centers gives
\begin{equation}
    \notag
 \Phi_b(s)\le C\Phi_b(s_0)e^{-2\beta(s-s_0)},\qquad
 v_b(0,s)\le C\Phi_b(s_0)^{\frac{1}{2\beta}}e^{-(s-s_0)}, 
\end{equation}
for all $b\in B_\delta(a)$ and $s\ge s_0$.
Returning to the original variables, we obtain
\begin{equation}
 \frac1{F(u(b,T-e^{-s}))}
 =e^s v_b(0,s)
 \le C e^{s_0}\Phi_b(s_0)^{\frac{1}{2\beta}}
 \le C e^{s_0}\varepsilon_*^{\frac{1}{2\beta}}
 \label{eq:3.11}
\end{equation}
uniformly for $b\in B_\delta(a)$ and $s\ge s_0$.
Since $1/F(\xi)\to\infty$ as $\xi\to\infty$, \eqref{eq:3.11} implies
\[
 \sup_{B_\delta(a)\times[T-e^{-s_0},T)}u<\infty.
\]
{\color{black}Thus $u$ is uniformly bounded in a fixed spatial neighborhood of $a$}
up to time $T$. This contradicts the assumption that $a$ is a blow-up
point and proves \eqref{eq:3.1}.
\end{proof}

\section{Proof of Theorem~\ref{Theorem:1.1}}

We first show how Theorem~\ref{Theorem:3.1} yields a lower bound in a shrinking neighborhood of the blow-up point.
This estimate controls the energy error for all $q_f\ge1$ and extends \cite{Ichiya}*{Lemma~3.2}.

\begin{lemma}
\label{Lemma:4.1}
Let $f$ and $u$ satisfy the assumptions of Proposition~\ref{Proposition:2.1}, and let $a\in\Omega$ be an interior blow-up point of $u$.
Let $\alpha\in(0,1)$ and $\varepsilon\in(0,1)$, and set
\[
    \varphi_{\alpha}(t):=\bigl(-\log(T-t)\bigr)^{\alpha}\sqrt{T-t}.
\]
Then there exists $t_1\in(0,T)$ such that 
\[
    \inf_{|x-a|\le \varphi_{\alpha}(t)}u(x,t)\ge F^{-1}\bigl((T-t)^{1-\varepsilon}\bigr)
\]
for every $t\in(t_1,T)$.
\end{lemma}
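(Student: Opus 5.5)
Since $F$ is decreasing, the claimed bound $u(x,t)\ge F^{-1}((T-t)^{1-\varepsilon})$ is equivalent to
\[
\log\frac{1}{F(u(x,t))}\ge (1-\varepsilon)\log\frac{1}{T-t}
\qquad\text{for all } |x-a|\le\varphi_\alpha(t).
\]
We obtain it by combining Theorem~\ref{Theorem:3.1} at the center with the logarithmic gradient bound of Lemma~\ref{Lemma:2.1}.

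First, by Theorem~\ref{Theorem:3.1} there are $c_0>0$ and $t'<T$ such that $(T-t)/F(u(a,t))\ge c_0$ for $t>t'$. Equivalently,
\[
\log\frac{1}{F(u(a,t))}\ge \log\frac{1}{T-t}+\log c_0 .
\]

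Next, fix $\varrho>0$ with $\overline{B_\varrho(a)}\Subset\Omega$, and apply \eqref{eq:2.2} of Lemma~\ref{Lemma:2.1} with $K=\overline{B_\varrho(a)}$. This gives $|\nabla_x\log(1/F(u))|\le C/\sqrt{T-t}$ on $K\times(t_K,T)$. Since $\varphi_\alpha(t)\to0$ as $t\uparrow T$, we have $\varphi_\alpha(t)<\varrho$ for $t$ close to $T$, so the segment from $a$ to any $x$ with $|x-a|\le\varphi_\alpha(t)$ lies in $K$. Integrating along that segment yields
\[
\log\frac{1}{F(u(x,t))}\ge \log\frac{1}{F(u(a,t))}-\frac{C|x-a|}{\sqrt{T-t}}
\ge \log\frac{1}{T-t}+\log c_0-C\bigl(-\log(T-t)\bigr)^{\alpha}.
\]

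Finally, set $L=\log\frac{1}{T-t}\to\infty$. It then suffices that $\varepsilon L+\log c_0-CL^\alpha\ge0$. This holds for all large $L$ because $\alpha<1$, so the linear term $\varepsilon L$ dominates both the constant $\log c_0$ and the sublinear term $CL^\alpha$. Choose $t_1$ larger than $t'$, $t_K$, and this threshold. Applying $F^{-1}$, which is decreasing, gives the stated inequality.

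The only substantive input is the nondegeneracy estimate of Theorem~\ref{Theorem:3.1}. Once it is available, the argument is routine bookkeeping. The one point to check is that the gradient bound in Lemma~\ref{Lemma:2.1} is uniform on a fixed compact neighborhood of $a$, which it is.
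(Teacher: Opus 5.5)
Your proposal is correct and follows essentially the same route as the paper: nondegeneracy at the center from Theorem~\ref{Theorem:3.1}, and the uniform logarithmic gradient bound integrated along segments. The paper cites that bound via Proposition~\ref{Proposition:2.1}, while you use \eqref{eq:2.2}, which is the same estimate. The conclusion then follows because $\varepsilon\log\frac{1}{T-t}$ dominates both $C\bigl(\log\frac{1}{T-t}\bigr)^{\alpha}$ and the constant, since $\alpha<1$.
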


\begin{proof}
By Proposition~\ref{Proposition:2.1}, we have
\[
    \biggl|\nabla\log\frac{1}{F(u)}\biggr|\le C(T-t)^{-\frac12}
\]
for all sufficiently small $T-t>0$.
Using Taylor's theorem,
Theorem~\ref{Theorem:3.1}, and the definition of $\varphi_\alpha(t)$, we obtain
\[
    \begin{aligned}
        \log\frac{1}{F\bigl(u(x,t)\bigr)}
        &\ge \log\frac{1}{F\bigl(u(a,t)\bigr)}-C(T-t)^{-\frac12}|x-a|
        \\
        &\ge \log\frac{C}{T-t}-C\Bigl(\log\frac{1}{T-t}\Bigr)^{\alpha}
        \ge \log\frac{1}{(T-t)^{1-\varepsilon}}
    \end{aligned}
\]
hold for all $|x-a|\le \varphi_{\alpha}(t)$ and all sufficiently small $T-t>0$.
This yields the desired estimate.
\end{proof}

We now prove Theorem~\ref{Theorem:1.1}. We first control the boundary and remainder terms in the energy inequality, then show that every limiting profile is stationary and equal to the ODE profile.

\begin{proof}[Proof of Theorem~\ref{Theorem:1.1}]
Let $v$ be defined by \eqref{eq:2.24}.
{\color{black}We establish \eqref{eq:4.1}--\eqref{eq:4.3} below for $|y|\le s^\alpha$ and all sufficiently large $s$.}
{\color{black}Differentiating \eqref{eq:2.24} with respect to $y$, we obtain}
\[    \nabla_y v = \frac{(T-t)^\frac32}{f(u)F(u)^2}\nabla_x u.\]
Since $u$ is a type~{\rm I} blow-up solution, the rescaled function $v$ is bounded. Hence, by Proposition~\ref{Proposition:2.1}, we obtain
\begin{equation}
    \label{eq:4.1}
    \frac{|\nabla_y v|}{v} \le C,
    \qquad
    \frac{|\nabla_y v|}{v^{2-q_f}}
    \le
    Cv^{q_f-1}
    \le C.
\end{equation}
It follows that
\[
    |\nabla_y (v^{q_f-1})| \le C \quad\text{if } q_f>1,
    \qquad
    |\nabla_y (\log v)| \le C \quad\text{if } q_f=1.
\]
Theorem~\ref{Theorem:3.1} and the type~I bound give
$c\le v(0,s)\le C$ for all sufficiently large $s$, with $c>0$.
Integrating the preceding gradient estimates along line segments
from $0$ to $y$, we obtain
\begin{equation}
    \label{eq:4.2}
    |v(y,s)^{q_f-1}| \le C(1+|y|) \quad\text{if } q_f>1,
    \quad
    |\log v(y,s)|\le C(1+|y|)\quad\text{if } q_f=1.
\end{equation}
{\color{black}Combining these estimates with Proposition~\ref{Proposition:2.1} and the identity}
\[
    \partial_s v = -v + \frac{T-t}{f(u)F(u)^2}\left(e^{-s}\partial_t u - \frac{e^{-\frac{s}{2}}}{2} y\cdot \nabla_x u \right),
\]
we obtain
\begin{equation}
    \label{eq:4.3}
    |\psi_f(v)|\le C(1+|y|),
    \qquad
    \frac{|\partial_s v|}{v^{2-q_f}}
    \le
    v^{q_f-1} + Cv^{q_f-1}(1+|y|)
    \le C(1+|y|),
\end{equation}
where $\psi_f$ is defined in \eqref{eq:2.26}.
These estimates allow us to control the boundary term. Indeed, by \eqref{eq:2.25}, we have
\begin{equation}
    \notag
    |G(s)|
    \le
    C (1+s^{\alpha}) s^{\alpha(N-1)} \exp\left(-\frac{s^{2\alpha}}{4}\right)
\end{equation}
for all $s>s_0$, and
\begin{equation}
    \label{eq:4.4}
    \int_{s_0}^\infty |G(s)|\, ds < \infty.
\end{equation}
We also verify the integrability of the remainder term $R(s)$.
{\color{black}Fix $\varepsilon\in(0,1/2]$. Lemma~\ref{Lemma:4.1}, together with \eqref{eq:1.7}, gives}
\[
    \bigl|f'(u)F(u)-q_f\bigr|
    \le C\left(\log\frac{1}{F(u)}\right)^{-r}
    \le C\bigl((1-\varepsilon)s\bigr)^{-r}\le 2^r C s^{-r}
\]
for all $|x-a|\le \varphi_{\alpha}(t)$ and all sufficiently large $s$.
Thus, \eqref{eq:2.25} and \eqref{eq:4.1} yield
\[
    R(s)\le Cs^{-2r}\int_{|y|\le s^\alpha}\rho\,dy\le Cs^{-2r}.
\]
Since $r>\frac12$, we obtain
\begin{equation}
\label{eq:4.5}
    \int_{s_0}^{\infty}R(s)\,ds<\infty.
\end{equation}

By \eqref{eq:4.1} and \eqref{eq:4.3}, the energy satisfies
$|E[v](s)|\le C$ for all sufficiently large $s$.
By \eqref{eq:4.4} and \eqref{eq:4.5}, the integral in
\[
 \mathcal E(s):=E[v](s)
 +\int_s^\infty\bigl(|G(\sigma)|+R(\sigma)\bigr)\,d\sigma
\]
is finite for all sufficiently large $s$.
Lemma~\ref{Lemma:2.3} implies that
\[
 \mathcal E'(s)\le-\frac12\int_{|y|\le s^\alpha}
       \frac{(\partial_s v)^2}{v^{2(2-q_f)}}\rho\,dy\le0.
\]
Since $\mathcal E$ is bounded below, it has a finite limit, and hence,
after increasing $s_0$ if necessary,
\begin{equation}
 \label{eq:4.6}
 \lim_{s\to\infty}E[v](s)=\ell\in\mathbb R,
 \qquad
 \int_{s_0}^\infty\int_{|y|\le s^\alpha}
       \frac{(\partial_s v)^2}{v^{2(2-q_f)}}\rho\,dy\,ds<\infty.
\end{equation}

We now turn to the limiting profiles. Theorem~\ref{Theorem:3.1}
and the first estimate in \eqref{eq:4.1}, integrated along line segments
from $0$ to $y$,
give
\[
 c e^{-C|y|}\le v(y,s)\le C
 \qquad\hbox{for }|y|\le s^\alpha,\quad s\ge s_0,
\]
with $c>0$, after increasing $s_0$ again if necessary.
This provides a uniform positive lower bound on each fixed spatial ball
for all sufficiently large $s$.
Let $(s_j)$ be any increasing sequence with $s_j\to\infty$ and set
\[
 v_j(y,s):=v(y,s+s_j).
\]
For every compact cylinder $Q\Subset\mathbb R^{N+1}$, the functions $v_j$
are defined on a neighborhood of $Q$ for all sufficiently large $j$,
and $c_Q\le v_j\le C$ there, with $c_Q>0$.
{\color{black}The estimates \eqref{eq:4.1} and \eqref{eq:4.3}, and the identity}
\[
 \frac{D_y^2v}{v}
 =\frac{e^{-s}D_x^2u}{f(u)F(u)}
  +(2-f'(u)F(u))\frac{\nabla_yv\otimes\nabla_yv}{v^2},
\]
together with Proposition~\ref{Proposition:2.1}, yield
\[
 \sup_Q\bigl(|\nabla_yv_j|+|D_y^2v_j|+|\partial_sv_j|\bigr)\le C_Q.
\]
Here $f'(u)F(u)$ is bounded on the relevant fixed interior region by
\eqref{eq:2.9} and its finite limit at infinity.
These bounds give equicontinuity of $v_j$ and $\nabla_yv_j$ on compact
cylinders. 
Thus the Arzel\`a--Ascoli theorem and a diagonal argument give a
subsequence such that
\[
 v_j\to v_\infty,\qquad
 \nabla_yv_j\to\nabla_yv_\infty
 \quad\hbox{locally uniformly in }\mathbb R^{N+1}.
\]
After passing to a further subsequence, we also have
$\partial_sv_j\rightharpoonup\partial_sv_\infty$ weakly in
$L^2_{\mathrm{loc}}(\mathbb R^{N+1})$.
The local lower bounds imply $v_\infty>0$.

Writing
\[
 u_j(y,s):=u\bigl(a+e^{-(s+s_j)/2}y,T-e^{-(s+s_j)}\bigr),
\]
we have $F(u_j)=e^{-(s+s_j)}/v_j\to0$ locally uniformly.
Thus, $u_j\to\infty$ and $f'(u_j)F(u_j)\to q_f$ locally
uniformly. The perturbation term in \eqref{eq:1.6} therefore tends to
zero locally uniformly, and the preceding convergences allow us to
pass to the limit in that equation in the sense of distributions.
To prove stationarity, fix $R,S>0$. For sufficiently large $j$,
$B_R\subset B_{(s+s_j)^\alpha}$ whenever $|s|\le S$.
The local upper and lower bounds for $v_j$ and \eqref{eq:4.6} give
\[
 \int_{-S}^S\int_{B_R}|\partial_sv_j|^2\,dy\,ds
 \le C_{R,S}\int_{s_j-S}^{s_j+S}\int_{|y|\le\sigma^\alpha}
       \frac{(\partial_\sigma v)^2}{v^{2(2-q_f)}}\rho\,dy\,d\sigma
 \longrightarrow0.
\]
It follows from the weak convergence of the time derivatives that
$\partial_sv_\infty=0$ in every such cylinder. Hence $v_\infty$ is
independent of $s$ and satisfies
\[
 \Delta v_\infty-\frac12y\cdot\nabla v_\infty
 -(2-q_f)\frac{|\nabla v_\infty|^2}{v_\infty}
 -v_\infty+v_\infty^2=0
 \quad\hbox{in }\mathbb R^N
\]
in the sense of distributions. The bounds for $D_y^2v_j$ imply that
$\nabla v_\infty$ is locally Lipschitz. Since $v_\infty$ is locally
bounded away from zero, elliptic regularity gives
$v_\infty\in C^2(\mathbb R^N)$, so this stationary equation holds
classically.
Finally, we identify the stationary profile. By \eqref{eq:4.1}, the ratio $|\nabla v_\infty|/v_\infty$ is bounded.
Applying Lemma~\ref{Lemma:2.4} under the assumptions
$N\le2$ when $q_f=1$ and $q_f\ge\frac{N+2}{4}$ when $q_f>1$,
we obtain $|\nabla v_\infty|\equiv0$ in $\mathbb{R}^N$.
Since $v_\infty>0$, the stationary equation gives $v_\infty\equiv1$.
{\color{black}Since the sequence $(s_j)$ was arbitrary, this proves \eqref{eq:1.8}.}
\end{proof}
\section{Applications}

We now apply Theorem~\ref{Theorem:1.1} to several classes of nonlinearities.
We begin by verifying \eqref{eq:1.7} for
{\color{black}the power and exponential classes studied by
Chabi~\cites{Chabi,ChabiExponential}, under condition~\eqref{eq:5.1}.}
We then turn
to more rapidly growing examples
that fall outside the scope of Chabi's condition~\eqref{eq:5.1}.

\begin{proposition}
\label{Proposition:5.1}
Let $L\in C^1([u_*,\infty))$ be positive for some $u_*>1$.
Assume that there exist $r>1/2$ and $C>0$ such that
\begin{equation}
    \label{eq:5.1}
    \left|\frac{uL'(u)}{L(u)}\right|\le \frac{C}{(\log u)^r}
    \qquad\text{for all }u\ge u_*.
\end{equation}
Assume further that, for $u\ge u_*$, either $f(u)=u^pL(u)$ with $p>1$
or $f(u)=e^uL(e^u)$.
Then the limit $q_f=\lim_{u\to\infty} f'(u)F(u)$ exists and satisfies
\[
    q_f=\begin{cases}
        \displaystyle\frac{p}{p-1} & \text{if}\quad f(u)=u^pL(u), \\[6pt]
        1 & \text{if}\quad f(u)=e^uL(e^u).
    \end{cases}
\]
Moreover, \eqref{eq:1.7} holds with the above exponent $r$ in both cases.

Conversely, let $f\in C^1([0,\infty))$ be nonnegative and satisfy
\eqref{eq:1.2} and \eqref{eq:1.7} for some $q_f>1$ and $r>1/2$.
Setting $p=q_f/(q_f-1)>1$, the function $L(u):=u^{-p}f(u)$ is positive
and satisfies \eqref{eq:5.1} with the
same exponent $r$, for suitable $u_*>1$ and $C>0$.
\end{proposition}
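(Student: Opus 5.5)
The plan is to write $f'F$ in terms of the logarithmic derivative of $f$ and to expand $F$ by one integration by parts. Set $\eta(u):=uL'(u)/L(u)$, so that $|\eta(u)|\le C(\log u)^{-r}$ by \eqref{eq:5.1}.

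\emph{Power case.} For $f=u^pL$ we have $f'(u)/f(u)=(p+\eta(u))/u$. Integrating by parts,
\[
F(u)=\int_u^\infty\frac{ds}{s^pL(s)}
=\frac{u}{(p-1)f(u)}-\frac{1}{p-1}\int_u^\infty\frac{\eta(s)}{f(s)}\,ds .
\]
The boundary term at infinity vanishes because $s/f(s)\to0$, which follows from $L(s)\ge s^{-\epsilon}$ and $p>1$. Since $|\eta(s)|\le C(\log u)^{-r}$ for $s\ge u$, we get
\[
F(u)=\frac{u}{(p-1)f(u)}+O\big((\log u)^{-r}F(u)\big),
\qquad\text{hence}\qquad
\frac{f(u)F(u)}{u}=\frac{1}{p-1}+O\big((\log u)^{-r}\big).
\]
Multiplying by $f'(u)u/f(u)=p+\eta(u)$ gives
\[
f'(u)F(u)=\frac{p}{p-1}+O\big((\log u)^{-r}\big).
\]
It remains to compare $\log u$ with $\log(1/F(u))$. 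From the asymptotics just obtained, $F(u)\asymp u^{1-p}/L(u)$. Also $L(u)=u^{o(1)}$, because $\log L(u)=\int^u\eta(s)\,ds/s=o(\log u)$. Therefore $\log(1/F(u))\sim(p-1)\log u$, and \eqref{eq:1.7} follows.

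\emph{Exponential case.} Substitute $w=e^u$. Then $f(u)=wL(w)$ and
\[
F(u)=\int_w^\infty\frac{d\sigma}{\sigma^2L(\sigma)},
\]
which is the power case with $p=2$ in the variable $\sigma$. Hence $wL(w)F=1+O((\log w)^{-r})$. Next,
\[
f'(u)=wL(w)\,(1+\eta(w)),
\qquad\text{so}\qquad
f'(u)F(u)=1+O\big((\log w)^{-r}\big)=1+O(u^{-r}).
\]
Here $\log(1/F(u))\sim\log w=u$, which again gives \eqref{eq:1.7}.

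\emph{Converse.} Let $h:=f'F$, so $|h-q_f|\le C(\log(1/F))^{-r}$. Define $g:=\log(1/F)$, so that $g'=1/(fF)$. The relation $(fF)'=h-1$ gives
\[
f(u)F(u)=(q_f-1)u+O\Big(\int^u g^{-r}\Big).
\]
A first crude bound uses only $h\to q_f$: it gives $fF\sim(q_f-1)u$, so $g'\sim 1/((q_f-1)u)$ and $g\sim\log u/(q_f-1)$. Feeding this back, $\int^u(\log s)^{-r}\,ds=O(u(\log u)^{-r})$, so
\[
\frac{u}{f(u)F(u)}=\frac{1}{q_f-1}+O\big((\log u)^{-r}\big).
\]
Now compute, with $L=u^{-p}f$,
\[
\frac{uL'}{L}=\frac{uf'}{f}-p=h\cdot\frac{u}{fF}-p .
\]
Substituting $h=q_f+O((\log u)^{-r})$ and the estimate for $u/(fF)$ gives $q_f/(q_f-1)-p+O((\log u)^{-r})=O((\log u)^{-r})$. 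Positivity of $L$ is immediate from $f>0$.

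The main obstacle is the bootstrap in the converse, namely ensuring that integrating the error $g^{-r}$ does not lose the logarithmic rate. The remaining cases reduce to routine integrations by parts.
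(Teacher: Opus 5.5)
Your proposal is correct and follows essentially the same route as the paper: one integration by parts gives $F(u)=\frac{u^{1-p}}{(p-1)L(u)}\bigl(1+O((\log u)^{-r})\bigr)$, the exponential case reduces to the power case with $p=2$ via $\sigma=e^s$, and $\log L(u)=o(\log u)$ lets you compare $\log(1/F)$ with $\log u$ (respectively with $u$). For the converse you likewise use $(fF)'=f'F-1\to q_f-1$, integrate the $O((\log u)^{-r})$ error to get $fF/u=q_f-1+O((\log u)^{-r})$, and conclude from $uL'/L=f'F\cdot u/(fF)-p$; writing the power-case remainder as $O\bigl((\log u)^{-r}F(u)\bigr)$ via monotonicity of $(\log s)^{-r}$ is exactly the content of the paper's step of combining the integration by parts with the decay hypothesis on $uL'/L$.
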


\begin{proof}
In the first part of the proof, we assume that $f$ satisfies
$f(u)=u^pL(u)$ with $p>1$ or $f(u)=e^uL(e^u)$ for $u\ge u_*$.
{\color{black}We first record a consequence of \eqref{eq:5.1} that will be used in both cases: $L'(u)/L(u)=o(1/u)$, and integration gives}
$\log L(u)=o(\log u)$. Thus, for every $k>0$,
\[
    u^kL(u)=\exp\bigl(k\log u+\log L(u)\bigr)
    =\exp\bigl((k+o(1))\log u\bigr)\to\infty
    \quad\text{as}\quad u\to\infty.
\]
In particular, the integral defining $F(u)$ is finite for all
sufficiently large $u$ in both cases.

We now consider the power case. Suppose that $f(u)=u^pL(u)$ with $p>1$.
Integrating by parts then yields
\begin{equation}
    \notag
    F(u)=\int_u^{\infty}\frac{ds}{s^pL(s)}=\frac{1}{p-1}\frac{u^{1-p}}{L(u)}
    -\frac{1}{p-1}\int_u^\infty\frac{1}{s^pL(s)}\frac{sL'(s)}{L(s)}\,ds.
\end{equation}
Combining this with \eqref{eq:5.1} gives
\begin{equation}
    \label{eq:5.2}
    F(u)=\frac{1}{p-1}\frac{u^{1-p}}{L(u)}\left(
        1+O\bigl((\log u)^{-r}\bigr)
    \right).
\end{equation}
To determine the limiting coefficient, we differentiate $f(u)=u^pL(u)$ to obtain
\[
    f'(u)=p\,u^{p-1}L(u)+u^pL'(u)=u^{p-1}L(u)\left(p+\frac{uL'(u)}{L(u)}\right).
\]
Multiplying this by \eqref{eq:5.2} and using \eqref{eq:5.1}, we obtain
\[
    f'(u)F(u)=\frac{p}{p-1}+O\bigl((\log u)^{-r}\bigr)
    \quad\text{as}\quad u\to\infty,
\]
so $q_f=\frac{p}{p-1}$.
{\color{black}Moreover, \eqref{eq:5.2} gives $\log(1/{F(u)})=O(\log u)$ as $u\to\infty$, so}
\[
    \bigl|f'(u)F(u)-q_f\bigr|\left(\log\frac1{F(u)}\right)^r
    =O(1)
    \quad\text{as}\quad u\to\infty,
\]
again by \eqref{eq:5.1}. This proves \eqref{eq:1.7} in the power case.

{\color{black}Next, suppose that $f(u)=e^uL(e^u)$.}
The change of variables $t=e^s$ and the estimate
\eqref{eq:5.2}, applied with $p=2$, give
\[
    F(u)=\int_u^\infty\frac{e^{-s}}{L(e^s)}\,ds
    =\int_{e^u}^\infty\frac{ds}{s^2L(s)}
    =\frac{e^{-u}}{L(e^u)}\bigl(1+O(u^{-r})\bigr).
\]
Since
\[
    f'(u)=e^uL(e^u)\left(1+\frac{e^uL'(e^u)}{L(e^u)}\right),
\]
assumption \eqref{eq:5.1} yields
\[
    f'(u)F(u)=1+O(u^{-r})
    \quad\text{as }u\to\infty,
\]
which proves $q_f=1$.
Moreover, \eqref{eq:5.1} gives
$(\log L(e^u))'=e^uL'(e^u)/L(e^u)=O(u^{-r})=o(1)$,
so integration yields $\log L(e^u)=o(u)$.
The above formula for $F(u)$ thus implies
$\log(1/F(u))=u+o(u)=O(u)$ as $u\to\infty$, and hence
\[
    \bigl|f'(u)F(u)-1\bigr|\left(\log\frac{1}{F(u)}\right)^r
    = O(1) \quad\mbox{as}\quad u\to\infty. 
\]
{\color{black}Thus, \eqref{eq:1.7} holds with exponent $r$ in the exponential case.}

To prove the converse, suppose that $f$ satisfies \eqref{eq:1.2} and
\eqref{eq:1.7} with $q_f>1$ and $r>1/2$.
Since $(f(u)F(u))'=f'(u)F(u)-1\to q_f-1>0$ as $u\to\infty$,
l'H\^opital's rule gives $f(u)F(u)/u\to q_f-1$. 
Using
\[
    \frac{d}{du}\log\frac1{F(u)}=\frac1{f(u)F(u)},
\]
and integrating, we obtain
\[
    \log\frac1{F(u)}=(1+o(1))\frac{\log u}{q_f-1}
    \quad\text{as}\quad u\to\infty.
\]
Consequently, \eqref{eq:1.7} implies
\[
    f'(u)F(u)=q_f+O\bigl((\log u)^{-r}\bigr),
    \qquad
    (f(u)F(u))'=q_f-1+O\bigl((\log u)^{-r}\bigr).
\]
Since  
\[
\int_{u_0}^u(\log s)^{-r}\,ds=O(u(\log u)^{-r})
\]
for any
fixed $u_0>1$, integration of the second estimate yields
\[
    \frac{f(u)F(u)}u=q_f-1+O\bigl((\log u)^{-r}\bigr).
\]
{\color{black}To obtain the required differential bound, set $p=q_f/(q_f-1)$ and $L(u)=u^{-p}f(u)$. Combining the preceding estimates, we obtain}
\[
    \frac{uL'(u)}{L(u)}
    =\frac{uf'(u)}{f(u)}-p
    =\frac{f'(u)F(u)}{f(u)F(u)/u}-\frac{q_f}{q_f-1}
    =O\bigl((\log u)^{-r}\bigr).
\]
This proves \eqref{eq:5.1} with the same exponent $r$, after choosing
$u_*$ and $C$ appropriately, and completes the proof.
\end{proof}

\begin{remark}
\label{Remark:5.1}
In the exponential case, condition \eqref{eq:1.7} also covers nonlinearities of the form
$f(u)=e^uL(e^u)$ that do not satisfy the differential decay assumption
in \cite{ChabiExponential}. For example, let
\[
    f(u)=\exp\bigl(u+\sqrt{1+u}\bigr),\qquad u\ge0.
\]
Then $L(s)=\exp\bigl(\sqrt{1+\log s}\bigr)$, $s\ge1$, is slowly varying, but
\[
    \frac{sL'(s)}{L(s)}=\frac{1}{2\sqrt{1+\log s}}
\]
is not $O((\log s)^{-\alpha})$ for any $\alpha>1/2$.

To verify \eqref{eq:1.7}, set $g(u)=u+\sqrt{1+u}$.
Since $g'(u)\ge1$, we have $F(u)\le e^{-g(u)}$.
Integration by parts gives
\[
    F(u)=\frac{e^{-g(u)}}{g'(u)}
    +\int_u^\infty e^{-g(s)}\frac{-g''(s)}{g'(s)^2}\,ds.
\]
Using $1\le g'(u)\le3/2$ and
$-g''(u)=1/(4(1+u)^{3/2})$, we obtain
\[
    0\le f'(u)F(u)-1
    \le\frac{g'(u)e^{g(u)}F(u)}{4(1+u)^{3/2}}
    =O( u^{-3/2} ) 
    \quad\mbox{as}\quad u\to\infty. 
\]
Moreover, $e^{-g(u)}/g'(u)\le F(u)\le e^{-g(u)}$ implies
\[
    \log\frac{1}{F(u)}=g(u)+O(1)= u + O(u^{1/2}) 
    \quad\text{as}\quad u\to\infty.
\]
Thus $q_f=1$ and \eqref{eq:1.7} holds with $r=3/2$.
Consequently, Theorem~$\mathrm{\ref{Theorem:1.1}}$ applies to nonnegative type~I
blow-up solutions for this nonlinearity in dimensions $N\le2$.
\end{remark}

Returning to the power case, we recall from Proposition~\ref{Proposition:5.1} that, when $q_f>1$, condition
\eqref{eq:1.7} is equivalent to the differential decay condition 
\eqref{eq:5.1} in
\cite{Chabi} for $f(u)=u^pL(u)$ with $p=q_f/(q_f-1)$.
In particular, our framework includes the following nonlinearities,
for which condition~\eqref{eq:5.1},
or equivalently~\eqref{eq:1.7}, is verified in~\cite{Chabi}:
\begin{itemize}
    \item $f(u)=u^p\bigl(\log(e+u)\bigr)^q$, where $p>1$ and $q\in\mathbb{R}$;
    \item $f(u)=u^p \exp\bigl((\log (u+1))^q \bigr)$, where $p>1$ and $q\in (0,1/2)$; 
    \item $f(u)=u^p\bigl(2+\sin((\log(u+1))^q)\bigr)$, where $p>1$ and $q\in (0,1/2)$; 
    \item $f(u)=u^p (\log (3+u))^{\sin (\log (\log (3+u)))}$, where $p>1$.
\end{itemize}
Similarly, Proposition~\ref{Proposition:5.1} shows that the exponential
nonlinearities in \cite{ChabiExponential} satisfy \eqref{eq:1.7} with $q_f=1$.
Examples include the following:
\begin{itemize}
    \item $f(u)=(u+K)^q e^u$, where $K>0$ and $q\in\mathbb{R}$,
    or $K=0$ and $q\ge1$;
    \item $f(u)=\bigl(\log(u+K)\bigr)^q e^u$, where $K>1$ and
    $q\in\mathbb{R}$, or $K=1$ and $q\ge1$;
    \item $f(u)=\exp\bigl(u\pm(u+1)^\nu\bigr)$, where
    $\nu\in(0,1/2)$;
    \item $f(u)=(u+1)^{\sin(\log(u+1))}e^u$.
\end{itemize}
Beyond this exponential class, our framework includes the following
rapidly growing nonlinearities with $q_f=1$:
\begin{itemize}
    \item $f(u)=u^q\exp\bigl(u^p\bigr)$, where $p>1$ and $q\in \{0\}\cup [1,\infty)$;
    \item $f(u)=\exp\bigl(\exp(u^p)\bigr)$, where $p\ge1$.
\end{itemize}
Thus, we can also treat examples such as $f(u)=\exp\bigl(\exp(u^2)\bigr)$, which could not be handled in \cite{Ichiya} for technical reasons.
{\color{black}In the next two subsections, we verify \eqref{eq:1.7} for these rapidly growing nonlinearities, both of which satisfy $q_f=1$.} 
Consequently, in dimensions $N\le2$, Theorem~\ref{Theorem:1.1} applies to type~I blow-up solutions at the blow-up point, 
without imposing radial symmetry on the solutions.
{\color{black}Further examples of exponential-type nonlinearities satisfying \eqref{eq:1.7} can be found in \cite{FujishimaIokuRufTerraneo}*{Section~4}.}

\subsection{The case $f(u)=u^q\exp(u^p)$}

We first consider $f(u)=u^q\exp\bigl(u^p\bigr)$ with $p>1$ and $q\in \{0\}\cup [1,\infty)$.
This nonlinearity was studied in \cite{Ichiya}.
To check the quantitative condition, we start from
\begin{equation}
    \notag
    F(u)=\int_u^{\infty}\frac{ds}{f(s)}=\int_u^{\infty} s^{-q}\exp\bigl(-s^p\bigr)\,ds.
\end{equation}
By the change of variables $r=s^p$, we obtain
\[
    F(u)=\frac1p\int_{u^p}^{\infty} r^{\frac{1-q}{p}-1}e^{-r}\,dr.
\]
Integrating by parts twice and using the fact that $r^\gamma e^{-r}\to0$ as $r\to\infty$ for every $\gamma\in\mathbb{R}$, we obtain
\[
    F(u)
    =\frac1p\,u^{1-q-p}e^{-u^p}
    \left(1-\frac{q+p-1}{p\,u^p}\right)
    +\frac{(q+p-1)(q+2p-1)}{p^3}
    \int_{u^p}^{\infty}r^{\frac{1-q}{p}-3}e^{-r}\,dr.
\]
To estimate the last integral, note that, for every $\gamma\in\mathbb{R}$ and $v\ge1$,
\[
\begin{aligned}
    \int_v^\infty r^\gamma e^{-r}\,dr
    &=v^\gamma e^{-v}\int_0^\infty\left(1+\frac{\sigma}{v}\right)^\gamma e^{-\sigma}\,d\sigma
    \\
    &\le v^\gamma e^{-v}\int_0^\infty(1+\sigma)^{\max\{\gamma,0\}}e^{-\sigma}\,d\sigma
    \le C_\gamma v^\gamma e^{-v},
\end{aligned}
\]
where $C_\gamma>0$ depends only on $\gamma$.
Taking $v=u^p$ and $\gamma=(1-q)/p-3$, we see that the remainder is
$O(u^{1-q-3p}e^{-u^p})$.
Factoring out $p^{-1}u^{1-q-p}e^{-u^p}$ therefore yields
\begin{equation}
    \label{eq:5.3}
    F(u)=\frac{1}{p}\,u^{1-q-p}e^{-u^p}\left(1-\frac{q+p-1}{p\,u^p}+O\left(\frac{1}{u^{2p}}\right)\right)
    \qquad\text{as }u\to\infty.
\end{equation}
Next, we compute the derivative:
\[
    f'(u)=q\,u^{q-1}e^{u^p}+p\,u^{q+p-1}e^{u^p}=u^{q-1}e^{u^p}\bigl(q+p u^p\bigr).
\]
Multiplying this by \eqref{eq:5.3}, we obtain
\[
    f'(u)F(u)=1-\frac{p-1}{p}\,\frac{1}{u^p}+O\left(\frac{1}{u^{2p}}\right)
    \qquad\text{as }u\to\infty.
\]
In particular, $q_f=\lim_{u\to\infty} f'(u)F(u)=1$ and
\begin{equation}
    \notag
    \bigl|f'(u)F(u)-q_f\bigr|=O\left(\frac{1}{u^p}\right)
    \qquad\text{as }u\to\infty.
\end{equation}
{\color{black}To complete the verification of \eqref{eq:1.7}, we use \eqref{eq:5.3} to obtain}
\[
    \log\frac{1}{F(u)}=u^p+(q+p-1)\log u+O(1)
    \qquad\text{as }u\to\infty.
\]
Therefore,
\[
    \bigl|f'(u)F(u)-q_f\bigr|\,\log\frac{1}{F(u)}=O(1)
    \qquad\text{as }u\to\infty,
\]
which proves \eqref{eq:1.7} for $f(u)=u^q\exp(u^p)$.

\subsection{The case $f(u)=\exp(\exp(u^p))$}

We next consider $f(u)=\exp\bigl(\exp(u^p)\bigr)$ with $p\ge1$. As in the preceding case, we begin by estimating the integral defining the normalization:
\[
    F(u)=\int_u^{\infty}\frac{ds}{f(s)}=\int_u^{\infty}\exp\bigl(-\exp(s^p)\bigr)\,ds.
\]
{\color{black}With the change of variables $r=\exp(s^p)$, we obtain}
\[
    F(u)=\frac1p\int_{\exp(u^p)}^{\infty}(\log r)^{\frac1p-1}\,\frac{e^{-r}}{r}\,dr.
\]
Since
\[
    \frac{d}{dr}\left(\frac{(\log r)^{\frac1p-1}}{r}\right)
    =\frac{(\log r)^{\frac1p-1}}{r^2}
    \left(\frac{\frac1p-1}{\log r}-1\right),
\]
integration by parts gives
\[
\begin{aligned}
    F(u)
    =\frac1p\,u^{1-p}e^{-u^p}e^{-\exp(u^p)}
    +\frac1p\int_{\exp(u^p)}^\infty
    \frac{(\log r)^{\frac1p-1}}{r^2}
    \left(\frac{\frac1p-1}{\log r}-1\right)e^{-r}\,dr.
\end{aligned}
\]
For $u\ge1$, we estimate the remaining integral as follows:
\[
\begin{aligned}
    \left|F(u)-\frac1p\,u^{1-p}e^{-u^p}e^{-\exp(u^p)}\right|
    \le\frac{C}{p}e^{-u^p}\int_{\exp(u^p)}^\infty
    \frac{(\log r)^{\frac1p-1}}{r}e^{-r}\,dr
    =Ce^{-u^p}F(u).
\end{aligned}
\]
{\color{black}Dividing by $F(u)>0$ and taking reciprocals, we obtain}
\begin{equation}
    \label{eq:5.4}
    F(u)=\frac{1}{p}\,u^{1-p}\exp\bigl(-u^p\bigr)\exp\bigl(-\exp(u^p)\bigr)\left(1+O\bigl(e^{-u^p}\bigr)\right)
    \qquad\text{as }u\to\infty.
\end{equation}
The derivative is given by
\[
    f'(u)=f(u)\,\frac{d}{du}\bigl(\exp(u^p)\bigr)=p\,u^{p-1}\exp(u^p)\,f(u).
\]
Multiplying this by \eqref{eq:5.4}, we obtain
\[
    f'(u)F(u)=1+O\bigl(e^{-u^p}\bigr)
    \qquad\text{as }u\to\infty.
\]
In particular, $q_f=\lim_{u\to\infty}f'(u)F(u)=1$ and
\begin{equation}
    \notag
    \bigl|f'(u)F(u)-q_f\bigr|=O\bigl(e^{-u^p}\bigr)
    \qquad\text{as }u\to\infty.
\end{equation}
Finally, to compare this error with the logarithmic factor in \eqref{eq:1.7}, we use \eqref{eq:5.4} to obtain
\[
    \log\frac{1}{F(u)}=\exp(u^p)+u^p+(p-1)\log u+O(1)
    \qquad\text{as }u\to\infty.
\]
Therefore,
\[
    \bigl|f'(u)F(u)-q_f\bigr|\,\log\frac{1}{F(u)}=O(1)
    \qquad\text{as }u\to\infty,
\]
which proves \eqref{eq:1.7} for $f(u)=\exp(\exp(u^p))$.

\medskip 

\noindent 
\textbf{Use of AI.}
{\color{black}During the preparation of this manuscript, the authors used ChatGPT and Codex (OpenAI),
with the GPT-6 Astra and GPT-6.1 Sol models, to assist with language editing
and with the development and verification of mathematical arguments.}
The authors reviewed and verified all content, including
the mathematical statements and proofs, and take full
responsibility for the final manuscript.

\medskip 

\noindent 
\textbf{Acknowledgements.} 
{\color{black}This work was partially supported by JSPS KAKENHI (grant numbers: 23K03179 and 26K00610).}

\end{document}